\documentclass[11pt]{amsart} 
\usepackage{amsmath, amssymb, amsfonts, amsbsy, amsthm, latexsym, stmaryrd, mathtools, fullpage, enumerate, tikz, standalone}
\usepackage{geometry}
\geometry{a4paper}
\usepackage{graphicx,float}
\usepackage[affil-it]{authblk}
\usepackage[mathscr]{eucal}
\usepackage{wrapfig}
\usepackage{todonotes}
\usepackage{tikz-cd}
\usepackage[foot]{amsaddr}
\allowdisplaybreaks

\usepackage[mathscr]{eucal}
\usepackage{url}
\usepackage{shuffle}

\usepackage{amsmath}
\DeclareMathOperator{\sh}{\shuffle}
\DeclareMathOperator{\bsh}{\hat{\shuffle}}

\newtheorem{thm}{Theorem}[section]
\newtheorem*{thm*}{Theorem}

\newtheorem{lem}[thm]{Lemma}
\newtheorem{prop}[thm]{Proposition}
\newtheorem{cor}[thm]{Corollary}
\theoremstyle{definition}
\newtheorem{remark}[thm]{Remark}
\newtheorem{defn}[thm]{Definition}
\newtheorem{example}[thm]{Example}
\newtheorem{ex}[thm]{Example}

\numberwithin{equation}{section}

\def\decon{{\Delta_{\mathrm{decon}}}}

\def\Q{{\mathbb{Q}}}
\def\P{{\mathbb{P}}}

\def\z{{\zeta}}

\def\bg{{\mathfrak{bg}}}

\def\gmot{{\mathfrak{g}^\mathfrak{m}}}

\def\qpoly{{\Q\langle e_0,e_1\rangle}}

\DeclareMathOperator{\Sh}{Sh}

\def\bDelta{{\Delta_\mathfrak{bl}}}
\DeclareMathOperator{\ic}{I}
\DeclareMathOperator{\bstar}{\hat{\star}}

\def\proj{{\P^1\setminus\{0,1,\infty\}}}

\title{A generalisation of quasi-shuffle algebras and an application to multiple zeta values}
\author{Adam Keilthy}
\address{Chalmers University of Technology, Gothenburg}
\subjclass[2010]{11M32, 11G99}
\email{keilthy@chalmers.se}
\thanks{This work was completed during the author's stay at the Max Planck Institute for Mathematics in Bonn. We would also like to extend our sincerest thanks to the reviewer, for their insightful and careful feedback.}

\begin{document}
\maketitle

\begin{abstract}
A large family of relations among multiple zeta values may be described using the combinatorics of shuffle and quasi-shuffle algebras. While the structure of shuffle algebras have been well understood for some time now, quasi-shuffle algebras were only formally studied relatively recently. In particular, Hoffman \cite{hoffmanquasi1} gives a thorough discussion of the algebraic structure, including a choice of algebra basis, and applies his results to produce families of relations among multiple zeta values and their generalisations \cite{hoffmanquasi2}. In paper, Hirose and Sato established of relations coming from a new generalised shuffle structure, lifting a set of graded relations established by the author \cite{keilthyblock1} to genuine ungraded relations. In this paper, we define a commutative algebra structure on the space of non-commutative polynomials on a countable alphabet, generalising the shuffle-like structure of Hirose and Sato. We show that, over the rational numbers, this generalised quasi-shuffle algebra is isomorphic to the standard shuffle algebra, allowing us to reproduce most of Hoffman's results on quasi-shuffle algebras. We then apply these results to the case of multiple zeta values, reproducing several known families of results and establishing several new families.
\end{abstract}

\section{Introduction}
The study of multiple zeta values
$$\z(n_1,\ldots,n_r) := \sum_{1\leq k_1<k_2<\cdots<k_r}\frac{1}{k_1^{n_1}\ldots k_r^{n_r}},$$
defined for positive integers $n_1,\ldots,n_r$ with $n_r\geq 2$, and their relations dates back to Euler who showed $\z(1,2)=\z(3)$. However, multiple zeta values did not receive significant attention until the 1990s when they began to appear in a wide number of areas, including quantum field theory \cite{bkconj}, knot theory \cite{barnatan}, and the study of associators \cite{drinfeld}. Of particular interest is to describe all relations among multiple zeta values, multiple polylogarithms and other such generalisations. A common conjecture is that, up to some renormalisation, all relations among multiple zeta values are given by the double shuffle relations. Indeed, shuffle algebras and their generalisations play an important role in describing relations among multiple zeta values and their generalisations. For example, the double shuffle relations are given by the existence of two algebra homomorphisms
\begin{align}
(\qpoly,\sh) &\to \mathcal{Z}\\
(\Q\langle Y\rangle, \ast) &\to \mathcal{Z}
\end{align}
where $\mathcal{Z}$ is the $\Q$-algebra of multiple zeta values. 

 The first of these homomorphisms - the shuffle relations - is a map from the shuffle algebra on the space of noncommutative polynomials in $\{e_0,e_1\}$ with product given recursively by
\begin{align*}
u\sh 1 &= 1\sh u = u,\\
e_iu\sh e_j v &= e_i(u\sh e_jv) + e_j(e_iu\sh v),
\end{align*}
for any monomials $u,v$ in $\{e_0,e_1\}$.

The second of these homomorphisms - the stuffle or harmonic relations - is a map from a quasi-shuffle algebra \cite{hoffmanquasi1,hoffmanquasi2} on the space of noncommutative polynomials in $Y=\{y_1,y_2,\ldots\}$ with product given recursively by
\begin{align*}
u \ast 1 &= 1\ast u = u,\\
y_ku\ast y_lv &= y_k(u\ast y_lv) + y_l(y_k\ast v) + y_{k+l}(u\ast v),
\end{align*}
for any monomials $u,v$ in $Y$.

\begin{example}\label{stuffle}
Explicitly, the homomorphism, for words not ending in $y_1$ is given by
\[ y_{n_1}y_{n_2}\cdots y_{n_k} \mapsto \zeta(n_1,\ldots, n_k).\]
For example, consider \[y_2\ast y_3 = y_2y_3 + y_3y_2 +y_5.\]
Under this map, this should correspond to $\z(2)\z(3)$, which is equal to
\begin{align*}
\sum_{0<m}\sum_{0<n}\frac{1}{m^2n^3} =&{} \sum_{0<m<n}\frac{1}{m^2n^3} + \sum_{0<n<m}\frac{1}{m^2n^3}\\
&+\sum_{0<m=n=N}\frac{1}{N^5}\\
=&{} \z(2,3)+\z(3,2)+ \z(5)
\end{align*}
as claimed.
\end{example}

In his papers \cite{hoffmanquasi1,hoffmanquasi2}, Hoffman explores defines a quasi-shuffle algebra on a countable alphabet $A$ to be the vector space $\Q\langle A\rangle$ of non-commutative polynomials equipped with a product defined recursively by
\begin{align*}
u \star 1 &= 1\star u = u,\\
au\ast bv &= a(u\ast y_lv) + b(y_k\ast v) + (a\lozenge b)(u\ast v),
\end{align*}
for monomials $u,v$, and letters $a,b\in A$, where $\lozenge$ defines a commutative, associative product on $\Q A$, the vector space spanned by elements of $A$. He goes on to show that $(\Q\langle A\rangle,\sh)\cong (\Q\langle A\rangle, \star)$, allowing us to transfer our understanding of the shuffle algebra to quasi-shuffle algebras. This can be applied to multiple zeta values, multiple $t$-values, interpolated multiple zeta values, etc, in order to prove various relations, or construct a set of stuffle-generators for $\mathcal{Z}$.

\begin{example}
It is known that, for an ordered alphabet $A$, Lyndon words form an algebra basis for $(\Q\langle A\rangle,\sh)$. Using Hoffman's isomorphism, one may show that Lyndon words also form an algebra basis for $(\Q\langle A\rangle, \star)$. In particular, we must have that $\mathcal{Z}$ is stuffle-generated by the set
\[\{\z(n_1,\ldots,n_r) \mid (n_1,\ldots,n_r)\text{ is a Lyndon word in }\mathbb{N}\text{ for the standard ordering}\}.\]
\end{example}

In a number of recent talks, Hirose and Sato propose a new shuffle-like structure describing relations among multiple zeta values. In the following, we will explain this new structure and how is may be viewed as a generalisation of Hoffman's quasi-shuffle algebras. We will define the notion of a generalised quasi-shuffle algebra and go on to reproduce most of Hoffman's initial results from \cite{hoffmanquasi1}. In particular, we show that generalised quasi-shuffle algebras are isomorphic to the standard shuffle algebra, and are hence generated by Lyndon words. 

We then apply this to the case of multiple zeta values, giving a new linear generating set, reproducing a number of known families of relations, and establishing several more, many of which can be verified using the MZV datamine \cite{datamine}.

\section{The block decomposition of multiple zeta values}
It is well known that multiple zeta values can alternatively be written as an iterated integral of $\proj$ \cite{delmixedtate} as defined by Chen \cite{chen}.

\begin{defn}
For any sequence $(a_0;a_1,\ldots,a_n;a_{n+1})\in\{0,1\}^{n+2}$, with $a_1=1, a_n=0$, define 
$$\ic(a_0;a_1,\ldots,a_n;a_{n+1}):=\int_{a_0\leq t_1\leq\cdots\leq t_n\leq a_{n+1}}\prod_{k=1}^n \frac{dt_k}{t_k-a_k}$$
\end{defn}

\begin{prop}[Chen \cite{chen}]
There exists a unique extension of $\ic(a_0;a_1,\ldots,a_n;a_{n+1})$ to all sequences such that
\begin{itemize}
\item $\ic(a_0;a_1)=1$
\item $\ic(a_0;0;a_1)=\ic(a_0;1;a_1)=0$
\item Products satisfy \begin{align*}&\ic(a_0;a_{1},\ldots,a_{m};a_{m+n+1})\ic(a_0;a_{m+1},\ldots,a_{m+n};a_N)\\
={}&\sum_{\sigma\in\Sh_{m,n}}\ic(a_0;a_{\sigma(1)},\ldots,a_{\sigma(m+n)};a_{m+n+1})\end{align*}
where $$\Sh_{m,n}:=\{\sigma\in S_{m+n}\mid \sigma(1)<\sigma(2)<\cdots<\sigma(m),\ \sigma(m+1)<\cdots<\sigma(m+n)\}$$
and $S_N$ denotes the symmetric group on $\{1,\ldots,N\}$.
\end{itemize}
\end{prop}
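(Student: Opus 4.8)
\emph{Proof proposal.} My plan is to separate the genuinely new content from the bookkeeping, then establish uniqueness and existence in turn. The first observation is that the integral written in the Definition is nonzero only when $a_0=0$ and $a_{n+1}=1$: if $a_0\geq a_{n+1}$ the domain $\{a_0\leq t_1\leq\cdots\leq t_n\leq a_{n+1}\}$ is empty (for $n\geq 1$), and if $a_0=a_{n+1}$ it is a point, so in all these cases we are forced to set $\ic\equiv 0$ in weight $\geq 1$, which one checks is consistent with the three properties (the product formula restricted to a fixed endpoint pattern is then a closed, trivially satisfied subsystem). So the real task is to extend $\ic$ from the \emph{convergent} sequences $(0;a_1,\dots,a_n;1)$ with $a_1=1,\,a_n=0$ — on which the Definition already applies, the integrand having no pole at either endpoint — to the remaining sequences $(0;a_1,\dots,a_n;1)$, i.e.\ those with $a_1=0$ or $a_n=1$.

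For uniqueness, suppose $\ic$ and $\ic'$ both satisfy the three properties. The third property says precisely that $w\mapsto\ic(0;w;1)$ is an algebra homomorphism $(\qpoly,\sh)\to\C$; since $e_0^{\sh k}=k!\,e_0^k$ in the shuffle algebra, and likewise for $e_1$, the second property forces $\ic(0;0^k;1)=\ic(0;1^k;1)=0$ for all $k\geq 1$. I would then induct on the pair $$\big(\#\{\text{trailing }1\text{'s}\},\ \#\{\text{leading }0\text{'s}\}\big)$$ ordered lexicographically: its minimal sequences are the convergent ones, on which $\ic=\ic'$ by hypothesis, together with the weight-$\leq 1$ ones, handled by the first two properties. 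Given a sequence with a trailing block $1^l$, $l\geq 1$, write it as $v\,1^l$ with $v$ not ending in $1$ and expand $\ic(0;v;1)\cdot\ic(0;1^l;1)$ by the third property; the left side is $0$, the summand with $l$ trailing $1$'s on the right is $v\,1^l$ itself, and every other summand has strictly fewer trailing $1$'s, so $\ic(0;v\,1^l;1)$ is determined by strictly smaller values. A sequence ending in $0$ but beginning with a block $0^k$ is treated symmetrically via $\ic(0;0^k;1)\cdot\ic(0;w;1)$, noting that inserting $0$'s into the interior of $w$ creates no trailing $1$'s, so the monovariant still decreases. Hence all values are forced and $\ic=\ic'$.

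For existence I would regularise analytically. For $0<\varepsilon<\tfrac12$ put $$\ic_\varepsilon(0;a_1,\dots,a_n;1):=\int_{\varepsilon\leq t_1\leq\cdots\leq t_n\leq 1-\varepsilon}\prod_{k=1}^{n}\frac{dt_k}{t_k-a_k};$$ this is an honest iterated integral along the segment from $\varepsilon$ to $1-\varepsilon$, so by the classical fact that such integrals obey the shuffle product (Chen, Ree) it satisfies the product formula \emph{exactly}, for every $\varepsilon$. Since the integrand's only singularities on $[0,1]$ are the simple poles at $0$ and $1$, a standard induction on $n$ — peeling off the outermost variable $t_n$, which contributes a $\log\varepsilon$ exactly when $a_n=1$, and the innermost variable $t_1$, which does so exactly when $a_1=0$ — shows $\ic_\varepsilon=P(\log\varepsilon)+o(1)$ as $\varepsilon\to 0^+$ for some polynomial $P$; I would then define $\ic(0;a_1,\dots,a_n;1):=P(0)$. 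Because ``constant term of the $\log\varepsilon$-expansion'' is a ring homomorphism on the algebra of functions admitting such an expansion, applying it to both sides of the product formula for $\ic_\varepsilon$ delivers the product formula for $\ic$; the normalisations follow from $\int_\varepsilon^{1-\varepsilon}\frac{dt}{t}=-\log\varepsilon+o(1)$ and $\int_\varepsilon^{1-\varepsilon}\frac{dt}{t-1}=\log\varepsilon+o(1)$, both with constant term $0$. One may instead cite the classical decomposition $\qpoly\cong(\mathcal C,\sh)\otimes\Q[e_0]\otimes\Q[e_1]$ of shuffle algebras, with $\mathcal C$ the span of convergent words, define $\ic$ by the integral on $\mathcal C$ and by $\ic(e_0)=\ic(e_1)=0$ on the other two factors, and extend multiplicatively — convergence being automatic since shuffles of convergent words are convergent.

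The step I expect to be the main obstacle is the asymptotic analysis in the existence half: showing that $\ic_\varepsilon$ genuinely has an expansion polynomial in $\log\varepsilon$, with no stray fractional powers or iterated logarithms, and uniformly enough that ``take the constant term'' commutes with the product formula. The cleanest route I know is the inductive peeling above, splitting each $t_i$-range at $\tfrac12$ so the contributions of the two endpoints decouple. By contrast, the homomorphism reformulation of the product property and the termination of the uniqueness recursion under the lexicographic monovariant are routine once that monovariant is fixed.
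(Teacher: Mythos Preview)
The paper does not prove this proposition: it is stated with attribution to Chen \cite{chen} and used as background, with no argument given. So there is no in-paper proof to compare against.

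Your proposal is a correct reconstruction of the standard shuffle-regularisation argument. The uniqueness half is exactly the usual induction: the shuffle homomorphism interpretation of the product property forces $\ic(0;e_0^{k};1)=\ic(0;e_1^{k};1)=0$, and then shuffling an arbitrary word against a pure power $e_1^{l}$ (to strip trailing $1$'s) or $e_0^{k}$ (to strip leading $0$'s) determines the divergent values recursively. Your lexicographic monovariant on $(\#\text{trailing }1\text{'s},\,\#\text{leading }0\text{'s})$ is the right bookkeeping device and does terminate. For existence, both routes you sketch are the standard ones: the analytic one via $\ic_\varepsilon$ and ``constant term in $\log\varepsilon$'' is the tangential-basepoint regularisation, and the algebraic one via the shuffle decomposition $\qpoly\cong(\mathcal C,\sh)\otimes\Q[e_0]\otimes\Q[e_1]$ is the Hoffman--Ihara--Kaneko--Zagier approach; either is acceptable, and you have correctly identified the polynomial-in-$\log\varepsilon$ asymptotics as the only genuine analytic input.

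One small point: your opening paragraph on endpoint patterns other than $(a_0,a_{n+1})=(0,1)$ is slightly compressed. You say we are ``forced'' to set $\ic\equiv 0$ there, but strictly speaking what happens is that the Definition already gives $0$ on the convergent words (the simplex being empty or a point), and then the \emph{same} uniqueness recursion you run for $(0;\cdots;1)$ forces the extension to vanish identically; the zero extension is then trivially consistent with the three axioms. It would be cleaner to say this explicitly rather than treat it as a separate preliminary observation.
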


Via the above theorem, we may view $\ic$ as a linear function on $\qpoly$ by defining
\[\ic(0;e_{i_1}\cdots e_{i_n};1):= \ic(0;i_1,\ldots,i_n;1)\]
and 
\[\ic(e_{i_0}\cdots e_{i_{n+1}}):=\ic(i_0;i_1,\ldots,i_n;i_{n+1}).\]

Defining the mapping $\phi:(n_1,\ldots,n_r)\mapsto (0;1,\{0\}^{n_1-1},1,\ldots,1,\{0\}^{n_r-1};1)$, where $\{0\}^k$ represents the sequence of $k$ repeated zeroes, we can write any multiple zeta value as an iterated integral:
\[\z(n_1,\ldots,n_r) = (-1)^r\ic(\phi(n_1,\ldots,n_r)).\]
Using this equality, and Chen's theorem, we see that the image $\ic(\qpoly)$ is equal to the algebra of multiple zeta values.

\begin{example}\label{zeta2int}
We claim $\z(2)=-\ic(0;1,0;1)$.
\begin{align*}
\ic(0;1,0;1)=&{} \int_{0\leq t_1\leq t_2\leq 1}\frac{dt_1}{t_1-1}\frac{dt_2}{t_2}\\
=&{} -\int_{0\leq t_1\leq t_2\leq 1}dt_1\sum_{n\geq 0}t_1^n \frac{dt_2}{t_2} = -\int_{0\leq t_2\leq 1}dt_2\sum_{n\geq 0} \frac{t_2^n}{n+1}\\
={}& -\sum_{n\geq 0}\frac{1}{(n+1)^2}=-\z(2)
\end{align*}
\end{example}

In fact, using the third part of Chen's theorem, we see that $\ic:(\qpoly,\sh) \to (\mathcal{Z},\cdot)$ is an algebra morphism from the shuffle algebra to the algebra of multiple zeta values. Thus via iterated integrals, and the map from Example \ref{stuffle}, we see that relations among multiple zeta values are encoded in the combinatorics of shuffle and quasi-shuffle algebras. Here, we will introduce a third such structure.

In \cite{charthesis}, Charlton introduces the notion of the block decomposition of a word in two letters $\{x,y\}$ as follows.

\begin{defn}
A word in $\{x,y\}$ is called alternating if it is non-empty and contains no subsequences of the form $xx$ or $yy$. The block decomposition of a word $w$ is the unique minimal factorisation into alternating words. Explicitly, the block decomposition of a word $w$ is the unique factorisation $w=w_1w_2\cdots w_k$ such that each $w_i$ is alternating and the last letter of $w_i$ equals the first letter of $w_{i+1}$  for each $1\leq i <k$. 
\end{defn}

\begin{defn}
Let $Z=\{z_1,z_2,\ldots\}$ and define a $\Q$-linear isomorphism $e_0\qpoly\to \Q\langle Z\rangle$ as follows. For a word $w=e_0w_0$, let $w_1w_2\cdots w_k$ be its block decomposition. We then map $w$ to the monomial $z_{|w_1|}z_{|w_2|}\ldots z_{|w_k|}$, where $|u|$ denotes the length of the word $u$.
\end{defn}

Using this isomorphism, we can view $\ic$ as a linear map $\Q\langle Z\rangle \to \mathcal{Z}$. Note that, for reasons of parity, if $i_1+\cdots+ i_k \equiv k+1$ (mod $2$), then $\ic(z_{i_1}\ldots z_{i_k})=0$, as the corresponding sequence of $0$s and $1$s begins and ends on $0$.

We define an algebra structure on $\Q\langle Z\rangle$ via the following recursive formulae.
\begin{align*}
w\bsh 1 &= 1\bsh w = w\\
z_au\bsh z_bv &:= z_a(u\bsh z_bv) + z_b(z_au\bsh v) - \zeta_{a+ b}(u\bsh v)
\end{align*}
for $a,b\in \mathbb{N}$, and $u,v,w$ monomials in $k\langle Z\rangle$. Here $\zeta_{a}$ is the linear map defined by
\begin{align*}
\zeta_a(z_bw) &:= z_{a+b}w,\\
\zeta_a(1) &:= 0.
\end{align*}
This is a special case of Definition \ref{gqshuff}, and defines an associative, commutative product on $\Q\langle Z\rangle$ called the block shuffle product. We then have the following result, initially proposed by Hirose and Sato in a series of talks, and established in a recent preprint \cite{hirosesatoblock}.

\begin{prop}\label{fullblockshuff}
Up to some choice of normalisation, $\ic(\Q\langle Z\rangle^{\bsh2})=0$. That is to say, for all monomials $u,v$
\[\ic(u\bsh v)=0\]
where $\ic$ is viewed as a linear function on $\Q\langle Z\rangle$.
\end{prop}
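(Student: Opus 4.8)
\begin{sketch}
The approach I would take, which is essentially that of Hirose and Sato \cite{hirosesatoblock}, is to exploit the symmetries of the iterated integral $\ic$ on $\proj$ that go beyond Chen's shuffle relations. First one must pin down the ``choice of normalisation'': extend $\ic$ to all of $\qpoly$, and hence to all of $\Q\langle Z\rangle$ through the block isomorphism, by shuffle regularisation, so that $\ic\colon(\qpoly,\sh)\to\mathcal{Z}$ remains an algebra morphism while the divergent words acquire fixed values. On top of the shuffle relations one then has the duality relation coming from the change of variables $t\mapsto 1-t$ (equivalently, reversal of the path of integration composed with the swap $0\leftrightarrow 1$), together with the vanishing relations $\ic(a;0;b)=\ic(a;1;b)=0$ and, more generally, $\ic(a;w;a)=0$. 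The key point is that the block shuffle product is tailored precisely so that shuffle and duality together annihilate every product $u\bsh v$.

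Concretely, I would prove $\ic(u\bsh v)=0$ by induction on the weight, i.e.\ on $\sum_j i_j$ for a monomial $z_{i_1}\cdots z_{i_k}$. Applying $\ic$ to the defining recursion gives
\[\ic(z_au\bsh z_bv)=\ic\big(z_a(u\bsh z_bv)\big)+\ic\big(z_b(z_au\bsh v)\big)-\ic\big(\zeta_{a+b}(u\bsh v)\big),\]
so it suffices to control how $\ic$ behaves under the two operations appearing on the right: left multiplication by a letter $z_a$, which under the block isomorphism prepends an alternating block of length $a$ to the underlying word in $e_0\qpoly$, and the merge operator $\zeta_{a+b}$, which lengthens the leading block by $a+b$. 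The technical core is a lemma rewriting the effect of each of these ``one-block'' moves on $\ic$ in terms of the duality relation and the shuffle-regularisation identities; once one has it, the three terms on the right are seen to cancel, using the inductive hypothesis for the lower-weight block-shuffle products $u\bsh z_bv$, $z_au\bsh v$ and $u\bsh v$.

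The base case already exhibits the mechanism: there $z_a\bsh z_b=z_az_b+z_bz_a$, and one checks directly that $\ic(z_az_b)=-\ic(z_bz_a)$. If $a$ and $b$ have the same parity, both words attached to $z_az_b$ and to $z_bz_a$ end in the letter $e_0$, so each integral vanishes as one with coinciding endpoints; if $a$ and $b$ have opposite parity, the word attached to $z_bz_a$ is obtained from that attached to $z_az_b$ by reversing and swapping $0\leftrightarrow1$ (reversing a word reverses its block decomposition), so the two values agree up to the duality sign $(-1)^{a+b}=-1$.

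I expect the main obstacle to be the bookkeeping in the inductive step, for two reasons. First, prepending a block of even length flips the remainder of the underlying word ($e_0\leftrightarrow e_1$), so the dictionary between the $\bsh$-moves and operations on iterated integrals is parity-dependent and does not propagate uniformly. Second, whenever one of these moves produces a word beginning with a forbidden $e_0$ or ending with a forbidden $e_1$, a shuffle-regularisation correction appears, and one must check that all such corrections cancel across the three terms. Organising this uniformly --- for instance by packaging all weight-$n$ block-shuffle relations into a single generating series, or by first establishing the statement in the associated graded (the linearised block relation of \cite{keilthyblock1}) and then lifting it along the weight filtration --- is where the real work lies. Alternatively, one can derive the statement from the confluence relations of Hirose and Sato by verifying that each $u\bsh v$ lies in their $\Q$-span, at the cost of a less self-contained argument.
\end{sketch}
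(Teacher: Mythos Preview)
The paper does not supply its own proof of this proposition. It states the result and attributes it entirely to Hirose and Sato, writing that it was ``initially proposed by Hirose and Sato in a series of talks, and established in a recent preprint \cite{hirosesatoblock}''; elsewhere in the text the statement is even cross-referenced as ``Conjecture~\ref{fullblockshuff}''. The proposition functions purely as an external input on which the later applications (Corollaries~\ref{quasipower} and below) rest, so there is no in-paper argument against which to compare your sketch.

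Since your outline is explicitly modelled on the Hirose--Sato argument, any genuine comparison would have to be made against that preprint rather than against this paper. One caution about the inductive scheme you describe: the recursion for $\bsh$ produces $z_a(u\bsh z_bv)$, $z_b(z_au\bsh v)$ and $\zeta_{a+b}(u\bsh v)$, all of the \emph{same} total weight as $z_au\bsh z_bv$, and the inductive hypothesis only yields $\ic(u\bsh z_bv)=\ic(z_au\bsh v)=\ic(u\bsh v)=0$, not the vanishing of $\ic$ on those inner block-shuffles after left-multiplication by a letter or after applying $\zeta_{a+b}$. So the ``lemma rewriting the effect of each of these one-block moves on $\ic$'' that you allude to is really the entire content of the proof, not an auxiliary step, and your sketch does not indicate what that lemma would actually say or why duality plus shuffle regularisation suffice to establish it. Your base case is handled correctly, but the passage from there to the general step is precisely where the work is, as you yourself anticipate.
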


\begin{example}
By considering 
\[z_4\bsh z_3z_2z_2= z_4z_3z_2z_2 + z_3z_4z_2z_2 + z_3z_2z_4z_2 + z_3z_2z_2z_4 - z_9z_2 -z_3z_8,\]
 we obtain the following (genuine) relation among multiple zeta values:
\[-\z(2,1,2,1,3)+\z(3,2,1,3)+\z(3,1,2,3)+\z(3,1,3,2)=\z(2,2,2,3)+\z(3,2,2,2).\]
\end{example}

In this paper we show that this result allows us to easily reproduce several families of known relations among multiple zeta values, as well as provide a number of generalisations.

\begin{example}
The following families of results - due to Broadhurst, Bradley, Borwein, and Lisonek \cite{bbbl}, and Bowman and Bradley \cite{bowmanbradley02} respectively - follow from Conjecture \ref{fullblockshuff} and the discussions below.
\begin{align*}
\z(\{1,3\}^k)&=\frac{\z(\{2\}^{2k})}{2k+1}=\frac{2\pi^{4k}}{(4k+2)!},\\
\z(\{2\}^k\sh\{1,3\}^n)&=\frac{\pi^{4n+2k}}{(2n+1)(4n+2k+1)!}\binom{2n+k}{k}.
\end{align*}
\end{example}

\section{Generalised  quasi-shuffle algebras}
Inspired by Hirose and Sato's definition of the block shuffle product, we define a generalisated quasi-shuffle algebra as follows. Let $Z$ be a countable set of letters, and $k$ a field of. Let $-\lozenge -: kZ\otimes kZ\to kZ$ be a $k$-linear, commutative, associative product on the vector space $kZ$. We furthermore define for each $a\in Z$ the linear map
\begin{align*}
L_a: kZ &\to kZ,\\
 z &\mapsto a\lozenge z.
\end{align*}
We extend each $L_a$ to a map
\[ k\langle Z \rangle\to k\langle Z\rangle\]
by defining 
\begin{align*}
L_a(xw) &:= L_a(x)w, \ x\in Z,\\
L_a(1) &:= 0.
\end{align*}

\begin{defn}\label{gqshuff}
We define a generalised quasi-shuffle product $\star$ on $k\langle Z\rangle$ to be a bilinear product given recursively by
\begin{align*}
w\star 1 &= 1\star w = w\\
au\star bv &:= a(u\star bv) + b(au\star v) - L_{a\lozenge b}(u\star v)
\end{align*}
for $a,b\in Z$, and $u,v,w$ monomials in $k\langle Z\rangle$.
\end{defn}

In the case $Z=\{z_1,z_2,\ldots\}$ and $z_m\lozenge z_n:=z_{m+n}$, we obtain precisely the block shuffle algebra of Hirose and Sato. In this case, we denote the product by $\bsh$ rather than $\star$.

The reader can readily verify that $(k\langle Z\rangle, \star)$ is a commutative associative algebra, e.g. by induction on the length of words. While Hoffman's results cannot be immediately applied, the same methods apply. We assume that $k=\Q$, though the results may be extended easily to any field of characteristic zero.

\begin{defn}
A composition $I$ of $n$ is a sequence of positive integers $(i_1,\ldots,i_l)$ such that $i_1+\cdots +i_l=n$. Given a composition $I$ of $n$ into $l$ parts and a composition $J$ of $l$ into $k$ parts, we define the product composition:
$$J\circ I := (i_1+\cdots+ i_{j_1},i_{j_1+1}+\cdots+i_{j_1+j_2},\ldots,i_{j_1+\cdots+j_{k-1}+1}+\cdots+i_{j_1+\cdots+j_k}).$$
Denote by $\mathcal{C}(n)$ the set of compositions of n.
\end{defn}

We define an action of compositions on $\Q\langle Z\rangle$ as follows. Define $[z_{a_1}\ldots z_{a_k}]:=z_{a_1}\lozenge\cdots\lozenge z_{a_k}$, and given a composition $I$ of $n$, define
$$I[z_{a_1}\ldots z_{a_n}]:=[z_{a_1}\ldots z_{a_{i_1}}][z_{a_{i_1}+1}\ldots z_{a_{i_1+i_2}}]\cdots [z_{a_{i_1+\cdots+i_{l-1}+1}}\ldots z_{a_n}]$$
and $I[w]=0$ for any words not of length $n$.

\begin{prop}\label{Tanh}
Let $\Psi_{\tanh}:k\langle Z\rangle\to\Q\langle Z\rangle$ be the linear map with $\Psi_{\tanh}(1)=1$ and, for $w$ a word of length $n$
$$\Psi_{\tanh}(w) = \sum_{(i_1,\ldots,i_l)\in\mathcal{C}(n)}c_{i_1}\ldots c_{i_l}(i_1,\ldots,i_l)[w]$$
where $c_j$ is the coefficient of $x^j$ in the Taylor expansion of $\tanh(x)$. Then $\Psi_{\tanh}$ is an algebra isomorphism
$$\Psi_{\tanh}:(\Q\langle Z\rangle,\sh)\to(\Q\langle Z\rangle,\star).$$
\end{prop}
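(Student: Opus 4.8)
The plan is to verify directly that $\Psi_{\tanh}$ intertwines the shuffle product $\sh$ and the generalised quasi-shuffle product $\star$, and then to exhibit an explicit inverse, proving it is an isomorphism. The key structural fact I would isolate first is the behaviour of $\Psi_{\tanh}$ with respect to the \emph{deconcatenation of the first letter}: if $w = z_a u$ is a word of length $n$, then grouping compositions $(i_1,\dots,i_l) \in \mathcal{C}(n)$ according to the value $i_1 = j$ of their first part, one gets
\[
\Psi_{\tanh}(z_a u) = \sum_{j=1}^{n} c_j \, L_{[z_a \cdots z_{a_j}]}^{\text{(partial)}}\bigl(\Psi_{\tanh}(\text{tail})\bigr),
\]
where the $j=1$ term is $z_a \Psi_{\tanh}(u)$ and the higher terms fold the first $j$ letters of $w$ together via $\lozenge$ before prepending. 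More precisely, writing $\rho_j(w)$ for the word obtained from $w$ by replacing its first $j$ letters $z_{a_1}\cdots z_{a_j}$ by the single letter $[z_{a_1}\cdots z_{a_j}] = z_{a_1}\lozenge\cdots\lozenge z_{a_j}$, one has $\Psi_{\tanh}(w) = \sum_{j\ge 1} c_j\, (\text{first letter of }\rho_j(w))\cdot \Psi_{\tanh}(\text{rest of }\rho_j(w))$, where the ``rest'' has length $n-j$. This is the analogue of Hoffman's key recursion, with $c_j$ the Taylor coefficients of $\tanh$ in place of his exponential-type coefficients; the sign structure in $\star$ (the $-L_{a\lozenge b}$ term, versus $+$ in Hoffman's $\lozenge$-quasi-shuffle) is exactly what forces $\tanh$ rather than another generating function, because $\tanh$ is the unique (odd) power series with $\tanh(x) = x/(1+\text{something})$-type functional equation matching the recursion; concretely the needed identity on coefficients is the one encoding $\frac{d}{dx}\log\cosh x = \tanh x$, equivalently $\tanh' = 1 - \tanh^2$.

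The main computation is then an induction on $m+n$ proving
\[
\Psi_{\tanh}(w \sh w') = \Psi_{\tanh}(w) \star \Psi_{\tanh}(w')
\]
for words $w$ of length $m$ and $w'$ of length $n$. Writing $w = z_a u$, $w' = z_b v$, expand the left side using $z_a u \sh z_b v = z_a(u \sh z_b v) + z_b(z_a u \sh v)$ and apply the deconcatenation formula above to each term; expand the right side using the defining recursion for $\star$ together with the deconcatenation formula for $\Psi_{\tanh}(z_a u)$ and $\Psi_{\tanh}(z_b v)$. Both sides become sums indexed by how many initial letters of $w$ and of $w'$ get folded together; matching the two sides reduces, after applying the inductive hypothesis to strictly shorter shuffles, to a single identity among the coefficients $c_j$ — precisely the Taylor-coefficient form of a functional equation for $\tanh$. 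I expect this coefficient identity to be the technical heart: it is the ``addition-formula'' $\tanh(x+y) = \frac{\tanh x + \tanh y}{1 + \tanh x \tanh y}$ rearranged as $\tanh(x+y)(1 + \tanh x \tanh y) = \tanh x + \tanh y$, read off coefficient-wise, which is exactly what the $a(u\star bv) + b(au\star v) - L_{a\lozenge b}(u\star v)$ trichotomy produces.

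Finally, for bijectivity: $\Psi_{\tanh}$ preserves the length filtration on $\Q\langle Z\rangle$ (it sends a word of length $n$ to a combination of words of length $\le n$) and its ``diagonal'' part, the length-preserving component, acts on words of length $n$ as multiplication by $c_1^n = 1$ (since $c_1 = 1$), so $\Psi_{\tanh}$ is upper-triangular unipotent with respect to length and hence invertible; alternatively one writes the inverse explicitly using the Taylor coefficients of $\operatorname{arctanh}(x)$ in place of $\tanh(x)$, by the same deconcatenation bookkeeping. Either way invertibility is immediate once the structure of $\Psi_{\tanh}$ as a length-triangular map is in hand, so the only real obstacle is the coefficient identity for $\tanh$ in the intertwining step; everything else is the same pattern as Hoffman's argument in \cite{hoffmanquasi1}, adapted to carry the extra letter-folding maps $L_a$ and the sign.
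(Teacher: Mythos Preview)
Your approach is correct in outline but takes a genuinely different route from the paper. The paper does \emph{not} attempt a direct induction on $\Psi_{\tanh}$. Instead it (i) invokes Hoffman's lemma that $\Psi_f^{-1}=\Psi_{f^{-1}}$, so that it suffices to show $\Psi_{\tanh^{-1}}:(\star)\to(\sh)$ is a homomorphism; and (ii) proves the latter by \emph{dualising}: it passes to the graded dual and shows that the transpose $\Phi$ of $\Psi_{\tanh^{-1}}$ is a coalgebra morphism $(\Q\langle Z\rangle,\Delta)\to(\Q\langle Z\rangle,\bDelta)$, where $\Delta$ and $\bDelta$ are the coproducts dual to $\sh$ and $\star$. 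Since $\Phi$ is multiplicative for concatenation, this reduces to checking that $\Phi(z)$ is $\bDelta$-primitive for each letter $z$; the paper computes $\Phi(z)=\sum_{k\ge 0}\frac{1}{2k+1}\sum_{[z_{a_1}\cdots z_{a_{2k+1}}]=z} z_{a_1}\cdots z_{a_{2k+1}}$ and verifies primitivity by a combinatorial binomial-sum identity (evaluated via a coefficient-extraction integral). So where you land on the $\tanh$ addition law, the paper lands on a vanishing sum of signed binomials.

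What each route buys: your direct induction is more elementary and makes the role of the functional equation $\tanh(x+y)(1+\tanh x\,\tanh y)=\tanh x+\tanh y$ (equivalently $\tanh'=1-\tanh^2$) transparent as the obstruction that must vanish; it is closer in spirit to Hoffman's original argument in \cite{hoffmanquasi1}. The paper's dual/coalgebra approach is more structural: it reduces an infinite family of identities to a single primitivity check on generators, and it feeds directly into the later Hopf-algebra statements. One caution for your write-up: the operator $L_{a\lozenge b}$ does not merely produce a new first letter as in Hoffman's quasi-shuffle, it folds $a\lozenge b$ into the \emph{first letter of $u\star v$}, so the recursion is one level deeper and the inductive bookkeeping (tracking how many initial letters from each factor get folded together before a genuine concatenation occurs) is correspondingly heavier than you indicate; you should expect to iterate the $L$-term fully before the coefficient identity emerges cleanly. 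Your invertibility argument (length-triangular unipotent, or explicit inverse $\Psi_{\tanh^{-1}}$) is fine and agrees with the paper's use of Hoffman's lemma.
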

To prove this, we require the following two results. The first is due to Hoffman \cite{hoffmanquasi1}.

\begin{lem}
Let $f(z)=c_1z+c_2z^2+\cdots$ be a function analytic at $0$, with $c_1\neq 0$, and $c_i\in\Q$ for all $i$. Let $f^{-1}(z)=b_1z+b_2z^2+\cdots$ be its inverse. Then the map $\Psi_f:\Q\langle Z\rangle\to\Q\langle Z\rangle$ given by 
$$\Psi_f(w)=\sum_{(i_1,\ldots,i_l)\in\mathcal{C}(n)}c_{i_1}\ldots c_{i_l}(i_1,\ldots,i_l)[w]$$
for words of length $n$, and extended linearly to $\Q\langle Z\rangle$, has inverse $\Psi_f^{-1}=\Psi_{f^{-1}}$.
\end{lem}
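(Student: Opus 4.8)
The plan is to prove the sharper statement that $\Psi_f\circ\Psi_g=\Psi_{f\circ g}$ for any two power series $f,g$ of the form in the statement, where $f\circ g$ denotes the formal composition $z\mapsto f(g(z))$; since $g$ has no constant term this is again a power series of the same type, with rational coefficients because composition of rational power series has rational coefficients. The lemma then follows by taking $g=f^{-1}$, which exists as a formal power series with rational coefficients precisely because $c_1\neq 0$ (one may work throughout with formal power series, the analyticity hypothesis being unnecessary). Indeed, for the identity series $z\mapsto z$ one has $c_1=1$ and $c_i=0$ for $i>1$, so only the composition $(1,\dots,1)$ contributes to $\Psi_{\mathrm{id}}$, and $(1,\dots,1)[w]=w$ since $[z_a]=z_a$; hence $\Psi_{\mathrm{id}}=\mathrm{id}$, and $f\circ f^{-1}=f^{-1}\circ f=\mathrm{id}$ gives $\Psi_f\circ\Psi_{f^{-1}}=\Psi_{f^{-1}}\circ\Psi_f=\mathrm{id}$.

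First I would record the combinatorial identity $I[J[w]]=(I\circ J)[w]$, valid whenever $J$ is a composition of $|w|$ and $I$ a composition of the number of parts of $J$. Viewing $J$ as a partition of the positions of $w$ into consecutive blocks, $J[w]$ replaces each block by the iterated $\lozenge$-product of its letters; applying $I$ then merges consecutive blocks, and by associativity of $\lozenge$ the result equals the iterated $\lozenge$-product over the merged blocks. Since those merged blocks are exactly the blocks prescribed by the product composition $I\circ J$, the identity is immediate from the definition of $I\circ J$.

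Next, write $f_j,g_j$ for the coefficient of $z^j$ in $f,g$, write $[z^j]$ for the coefficient-of-$z^j$ functional, set $n=|w|$, and start from $\Psi_g(w)=\sum_{J\in\mathcal{C}(n)}g_{j_1}\cdots g_{j_p}\,J[w]$ with $J=(j_1,\dots,j_p)$. Applying $\Psi_f$ to each $J[w]$ (a word of length $p$) and using the identity above gives
\[
(\Psi_f\circ\Psi_g)(w)=\sum_{J\in\mathcal{C}(n)}\ \sum_{I\in\mathcal{C}(p)} g_{j_1}\cdots g_{j_p}\,f_{i_1}\cdots f_{i_l}\,(I\circ J)[w],
\]
where for each outer $J$ the inner sum runs over $I=(i_1,\dots,i_l)\in\mathcal{C}(p)$, $p$ being the number of parts of $J$. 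Grouping by $K=(k_1,\dots,k_m)=I\circ J$: the condition $I\circ J=K$ forces $I$ to have $m$ parts $(i_1,\dots,i_m)$ and $J$ to split as a concatenation $J=J^{(1)}\cdots J^{(m)}$ with each $J^{(s)}$ a composition of $k_s$ into $i_s$ parts, and conversely any such data is admissible. Since $\sum_{J^{(s)}}g_{\cdot}\cdots g_{\cdot}=[z^{k_s}]g(z)^{i_s}$, the coefficient of $K[w]$ is
\[
\sum_{(i_1,\dots,i_m)}f_{i_1}\cdots f_{i_m}\prod_{s=1}^{m}[z^{k_s}]g(z)^{i_s}=\prod_{s=1}^{m}\Big(\sum_{i\geq1}f_i\,[z^{k_s}]g(z)^{i}\Big)=\prod_{s=1}^{m}[z^{k_s}]f(g(z)),
\]
which equals $(f\circ g)_{k_1}\cdots(f\circ g)_{k_m}$. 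Hence $(\Psi_f\circ\Psi_g)(w)=\sum_{K\in\mathcal{C}(n)}(f\circ g)_{k_1}\cdots(f\circ g)_{k_m}\,K[w]=\Psi_{f\circ g}(w)$.

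I expect the main obstacle to be the reindexing in this last step: one must check carefully that the pairs $(I,J)$ with $I\circ J=K$ are in bijection with the data $\big((i_1,\dots,i_m),\,J^{(1)},\dots,J^{(m)}\big)$, and that under this bijection the weight $g_{j_1}\cdots g_{j_p}$ factors blockwise so that the factors reassemble into the coefficients of $g(z)^{i_s}$ and ultimately of $f(g(z))$. Everything else — the recursion for $\Psi_g$ on a word, the identity $I[J[w]]=(I\circ J)[w]$, and the formal power series manipulations — is routine bookkeeping.
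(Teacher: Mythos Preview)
Your proof is correct. The paper does not actually supply its own proof of this lemma but attributes it to Hoffman \cite{hoffmanquasi1}; your argument---establishing the stronger functoriality $\Psi_f\circ\Psi_g=\Psi_{f\circ g}$ via the identity $I[J[w]]=(I\circ J)[w]$ and then regrouping the double sum over $(I,J)$ by $K=I\circ J$---is precisely the approach Hoffman uses there, so there is nothing to contrast.
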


This lemma allows us to establish $\Psi_{\tanh}$ as the inverse map of a map $\Psi_{\tanh^{-1}}$. Rather than directly show $\Psi_{\tanh}$ to be a homomorphism, we instead show that its inverse $\Psi_{\tanh^{-1}}$ is.

\begin{prop}\label{tanhmorphism}
The map $\Psi_{\tanh^{-1}}$ is an algebra homomorphism $(\Q\langle Z\rangle, \star)\to(\Q\langle Z\rangle, \sh)$.
\end{prop}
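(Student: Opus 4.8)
The plan is to establish the identity $\Psi_{\tanh^{-1}}(u\star v)=\Psi_{\tanh^{-1}}(u)\sh\Psi_{\tanh^{-1}}(v)$ for all monomials $u,v$ directly, by induction on the total length $|u|+|v|$, following the scheme of Hoffman's original argument. Write $g=\tanh^{-1}$, so the Taylor coefficients satisfy $b_1=1$, $b_{2k}=0$ and $b_{2k+1}=\tfrac{1}{2k+1}$, and abbreviate $\Psi:=\Psi_{g}$. The only analytic fact about $g$ that will be used is the differential equation $(1-z^2)g'(z)=1$, equivalently the three-term recursion $k\,b_k=(k-2)\,b_{k-2}$ for $k\geq 3$; conceptually this reflects the addition law $\tanh^{-1}(x)+\tanh^{-1}(y)=\tanh^{-1}\!\big(\tfrac{x+y}{1+xy}\big)$, whose shape matches the three terms $a(\,\cdot\,)+b(\,\cdot\,)-L_{a\lozenge b}(\,\cdot\,)$ of the $\star$-recursion and explains why $\tanh^{-1}$, rather than a series with messier coefficients, is the relevant function.

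First I would record two elementary properties of $\Psi$, both immediate from its description as a sum over compositions. (1) $\Psi$ commutes with every operator $L_c$: since $L_c$ alters only the first letter of a word, and contracting a block whose first letter has already been hit by $L_c$ agrees with first contracting the block and then applying $L_c$, one gets $\Psi(L_c w)=L_c\Psi(w)$ term by term. (2) Splitting each composition of $n$ off its first part gives the ``first-block'' recursion
$$\Psi(z_{a_1}\cdots z_{a_n})=\sum_{k=1}^{n}b_k\,[z_{a_1}\cdots z_{a_k}]\cdot\Psi(z_{a_{k+1}}\cdots z_{a_n}),$$
in which, as $b_k=0$ for even $k$, only odd initial blocks contribute; isolating the $k=1$ term yields a recursion of the form $\Psi(aw)=a\,\Psi(w)+L_a\big(\Phi(w)\big)$, where $\Phi$ is an explicit linear operator (the same sum as in $\Psi$, but with the coefficient of a leading block of size $j$ replaced by $b_{j+1}$).

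For the inductive step, assume the identity for all pairs of total length $<N$ and take $u=au'$, $v=bv'$ with $a,b\in Z$ and $|u|+|v|=N$ (the cases where $u$ or $v$ is empty are trivial). Applying $\Psi$ to the $\star$-recursion $u\star v=a(u'\star v)+b(u\star v')-L_{a\lozenge b}(u'\star v')$, property (1) together with the induction hypothesis turns $\Psi(u\star v)$ into a combination of $a\cdot$, $b\cdot$, $L_a$, $L_b$ and $L_{a\lozenge b}$ applied to shuffle products of strictly shorter $\Psi$-images. On the other side, I expand $\Psi(u)$ and $\Psi(v)$ by (2), substitute into $\Psi(u)\sh\Psi(v)$, and apply the shuffle recursion to peel off the exposed letters. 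Comparing the two expansions, the contributions carrying a single exposed letter from $u$ or from $v$ agree by the induction hypothesis, and the remaining discrepancy is an identity purely about the blocks produced by contraction: for each way of forming an initial block of total size $k$ by merging letters from $u$ and from $v$ one must check that the net coefficient of that block is the same on both sides, and this is exactly where $k\,b_k=(k-2)\,b_{k-2}$ enters (in the smallest nontrivial case it is the equality $3b_3=b_1$ that makes the contracted terms cancel).

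I expect this last comparison to be the main obstacle: the single term $-L_{a\lozenge b}(u'\star v')$ must be reconciled with the whole family of partially contracted initial blocks that appears once the shuffle recursion meets the leading blocks of $\Psi(au')$ and $\Psi(bv')$, and a naive induction on $|u|+|v|$ does not visibly close. The likely remedy is to strengthen the inductive statement --- for instance to an identity governing $\Psi$ of a $\star$-product in which the leading letter (or leading block) of one factor is kept exposed, so that the first-block recursion (2) can be applied to a $\star$-product term by term --- after which the recursion for the $b_k$ (equivalently the $\tanh$ addition formula) forces the two sides to coincide and the remaining manipulations are routine. Once $\Psi_{\tanh^{-1}}$ is known to be an algebra homomorphism, the preceding lemma identifies it as a bijection with inverse $\Psi_{\tanh}$, and Proposition~\ref{Tanh} follows.
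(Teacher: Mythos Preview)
Your plan is a reasonable starting point, but as you yourself flag, the induction does not close: the step where the single contracted term $-L_{a\lozenge b}(u'\star v')$ must be reconciled with the whole family of partially contracted initial blocks on the shuffle side is left open, and the proposed remedy (a strengthened inductive hypothesis keeping a leading block exposed) is not formulated precisely enough to be assessed. The difficulty is structural, not cosmetic: in Hoffman's original quasi-shuffle the contracted term $(a\lozenge b)(u\ast v)$ \emph{prepends} a new letter and so strictly lowers combined length, whereas here $L_{a\lozenge b}$ merges into the first letter of $u'\star v'$, producing a word of the \emph{same} length. A single appeal to $k\,b_k=(k-2)\,b_{k-2}$ will not clear the obstruction; one is forced to track all ways an initial block can be assembled from letters of both factors, and this quickly becomes a nested double sum over compositions with no evident inductive handle.

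The paper avoids this entirely by dualizing. Rather than prove $\Psi_{\tanh^{-1}}$ multiplicative, it shows the graded dual map $\Phi$ is a coalgebra morphism $(\Q\langle Z\rangle,\Delta)\to(\Q\langle Z\rangle,\bDelta)$, where $\Delta$ and $\bDelta$ are the coproducts dual to $\sh$ and $\star$. Since single letters generate $(\Q\langle Z\rangle,\Delta)$ as a coalgebra, it suffices to check that $\Phi(z)$ is $\bDelta$-primitive for each $z\in Z$. One computes $\Phi(z)=\sum_{k\geq 0}\tfrac{1}{2k+1}\sum_{z_{a_1}\lozenge\cdots\lozenge z_{a_{2k+1}}=z}z_{a_1}\cdots z_{a_{2k+1}}$, decomposes $\bDelta(\Phi(z))$ by bidegree $(r,s)$, and after parametrising the contributions by triples (positions, composition, sign-sequence) the coefficient of the generic term in bidegree $(r,s)$ with $0<r\leq s$ reduces to
\[Q_m=\sum_{q=0}^m\frac{(-1)^q}{s-r+q}\binom{s-r+q}{m}\binom{m}{q},\]
which vanishes by a one-line generating-function argument (it equals $[x^m]\int_{-1}^x(y+1)^{s-r-1}(-y)^m\,dy=0$). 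Dualization buys exactly what your induction lacks: the awkward interaction between $L_{a\lozenge b}$ and the recursive $\star$-product is replaced by a closed-form computation on a single letter, and the combinatorics collapse to one binomial identity rather than a comparison over all pairs of compositions.
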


Since the proof of this proposition is somewhat tedious, we delay it until the end of this article. As a corollary to Proposition \ref{Tanh}, we obtain the following.

\begin{cor}\label{blockLyndon}
$(\Q\langle Z\rangle,\star)$ is the free polynomial algebra on the Lyndon words.
\end{cor}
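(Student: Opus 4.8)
The plan is to deduce Corollary \ref{blockLyndon} directly from Proposition \ref{Tanh} together with the classical structure theory of the shuffle algebra. First I would recall the well-known fact (Radford's theorem) that, for any totally ordered alphabet, the shuffle algebra $(\Q\langle Z\rangle,\sh)$ is the free commutative polynomial algebra on the set of Lyndon words: every word admits a unique expression as a decreasing product of Lyndon words under $\sh$, and the Lyndon words are algebraically independent. Since Proposition \ref{Tanh} furnishes an algebra isomorphism $\Psi_{\tanh}\colon(\Q\langle Z\rangle,\sh)\xrightarrow{\sim}(\Q\langle Z\rangle,\star)$, the images $\Psi_{\tanh}(\ell)$ of the Lyndon words $\ell$ form a set of free polynomial generators for $(\Q\langle Z\rangle,\star)$. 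The only remaining point is to show that one may replace $\Psi_{\tanh}(\ell)$ by $\ell$ itself, i.e. that the Lyndon words are \emph{themselves} a polynomial generating set for the $\star$-algebra.

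For this I would exploit the triangularity of $\Psi_{\tanh}$ with respect to the length grading. Both $\sh$ and $\star$ are filtered by word length (indeed $\star$ is only filtered, not graded, because of the $L_{a\lozenge b}$ term which strictly lowers length), and the formula defining $\Psi_{\tanh}$ shows that on a word $w$ of length $n$ the leading term is $c_1^{n}\,(1,1,\ldots,1)[w] = c_1^{n} w$ with $c_1 = 1$ (the linear coefficient of $\tanh$), plus a $\Q$-linear combination of $I[w]$ for compositions $I$ with fewer than $n$ parts, hence words of length strictly less than $n$. Thus $\Psi_{\tanh}$ is unitriangular with respect to the length filtration, with identity on the associated graded. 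Consequently the associated graded of $\star$ is the shuffle algebra itself on each graded piece, and $\Psi_{\tanh}(\ell) = \ell + (\text{lower length})$. Passing to a filtered generating argument: one shows by induction on length that the subalgebra of $(\Q\langle Z\rangle,\star)$ generated by the Lyndon words contains every word, using that $\ell$ and $\Psi_{\tanh}(\ell)$ agree modulo lower-length terms and that $\{\Psi_{\tanh}(\ell)\}$ generates; and algebraic independence of the $\ell$ follows from algebraic independence of the $\Psi_{\tanh}(\ell)$ again by the unitriangular change of variables. Hence $(\Q\langle Z\rangle,\star)$ is the free polynomial algebra on the Lyndon words.

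The main obstacle is bookkeeping around the fact that $\star$ is merely filtered rather than graded: unlike in the purely graded shuffle setting, one cannot simply match graded pieces, and care is needed to make the filtered-triangularity argument precise — in particular to verify that the change of variables $\ell \mapsto \Psi_{\tanh}(\ell)$ preserves both the "generates" and the "algebraically independent" halves of the polynomial-algebra property. A clean way to package this is to observe that any unitriangular (with respect to a filtration, identity on the associated graded) algebra automorphism of a polynomial ring sends one set of polynomial generators to another, and that a filtered algebra whose associated graded is a polynomial ring on classes admitting filtered lifts is itself polynomial on any such lifts; applying this to $\Psi_{\tanh}$ and the Lyndon words finishes the proof. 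The characteristic-zero hypothesis is used only through $c_1\neq 0$, which guarantees invertibility of $\Psi_{\tanh}$ and hence the triangularity statement.
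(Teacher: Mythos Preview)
Your proposal is correct and follows essentially the same approach as the paper: both invoke Radford's theorem for the shuffle algebra, transport generators via the isomorphism $\Psi_{\tanh}$, and then use the unitriangularity of $\Psi_{\tanh}$ with respect to the length filtration (the paper phrases this as $\Psi_{\tanh^{-1}}(w)$ having terms of length at most $|w|$) to replace $\Psi_{\tanh}(\ell)$ by $\ell$ via induction on length. Your write-up is more explicit about the filtered-versus-graded bookkeeping and the algebraic-independence half, but the underlying argument is the same one the paper sketches (following Hoffman).
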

\begin{proof}
Hoffman's proof of Theorem 2.6 \cite{hoffmanquasi1} applies exactly. We sketch the proof here, but refer the reader to Hoffman's proof for further detail. The proof proceeds by induction on the length of a word. Suppose $w$ is a word of length $l$. As $(\Q\langle Z\rangle, \sh)$ is a free polynomial algebra on Lyndon words, there exist Lyndon words $w_1,\ldots,w_n$ and a polynomial $P$ such that 
$$w=P(\Psi_{\tanh}(w_1),\Psi_{\tanh}(w_2),\ldots,\Psi_{\tanh}(w_n))$$
where $P$ is considered as a $\bsh$-polynomial. Since the shuffle product preserves length, and $\Psi_{\tanh^{-1}}(w)$ has terms with length at most $l$, we can assume every term of $P(w_1,\ldots,w_n)$ has length at most $l$, where $P$ is considered as a $\sh$-polynomial. But then
$$w-P(w_1,\ldots,w_n)=P(\Psi_{\tanh}(w_1),\ldots,\Psi_{\tanh}(w_n))-P(w_1,\ldots,w_n)$$
has only terms of length less than $l$, and so can be written as a $\star$-polynomial of Lyndon words.
\end{proof}

\begin{cor}
Let $Z_n$ be the $\Q$-span of words of length $n$ in $\Q\langle Z\rangle$, and let
\[Z(x)=\sum_{n\geq 0}(\dim Z_n)x^n\]
be the Poincar{\'e} series. Define $c_n$ by
\[ x\frac{d}{dx}\log Z(x) = \sum_{n \geq 1} c_n x^n.\]
Then the number of Lyndon words of length $n$ in $(\Q\langle Z\rangle,\star)$ is given by
\[L_n=\frac{1}{n}\sum_{d\mid n}\mu\left(\frac{n}{d}\right)c_d.\]
\end{cor}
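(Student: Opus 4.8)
The plan is to reduce the statement to a standard application of the Poincaré–Birkhoff–Witt / Möbius-inversion bookkeeping for free polynomial algebras, using Corollary \ref{blockLyndon}. First I would record that, by Corollary \ref{blockLyndon}, $(\Q\langle Z\rangle,\star)$ is the free polynomial algebra on the set of Lyndon words; write $L_n$ for the number of Lyndon words that have length $n$ \emph{as words in $Z$}. Since the shuffle product preserves length and $\Psi_{\tanh}$ is an isomorphism $(\Q\langle Z\rangle,\sh)\to(\Q\langle Z\rangle,\star)$ whose leading (top-length) term is the identity (its non-leading terms strictly decrease length, by the shape of the $\tanh$ coefficients and the definition of $I[w]$), the associated graded of the $\star$-algebra by the length filtration is again polynomial on symbols indexed by Lyndon words, with the generator attached to a Lyndon word of length $n$ sitting in graded degree $n$. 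Hence the Hilbert/Poincaré series of $\Q\langle Z\rangle$ in the length grading factors as
\[
Z(x)=\prod_{n\geq 1}\frac{1}{(1-x^n)^{L_n}}.
\]

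Next I would take $\log$ of both sides and apply $x\frac{d}{dx}$. Taking logarithms gives $\log Z(x)=-\sum_{n\geq 1}L_n\log(1-x^n)=\sum_{n\geq 1}L_n\sum_{k\geq 1}\frac{x^{nk}}{k}$, and then
\[
x\frac{d}{dx}\log Z(x)=\sum_{n\geq 1}L_n\sum_{k\geq 1}n x^{nk}=\sum_{m\geq 1}\Bigl(\sum_{d\mid m} d\,L_d\Bigr)x^m.
\]
Comparing with the defining expansion $x\frac{d}{dx}\log Z(x)=\sum_{m\geq 1}c_m x^m$ yields $c_m=\sum_{d\mid m} d\,L_d$ for every $m\geq 1$. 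This is exactly the hypothesis of Möbius inversion: applying it gives $n L_n=\sum_{d\mid n}\mu(n/d)c_d$, i.e. $L_n=\frac{1}{n}\sum_{d\mid n}\mu(n/d)c_d$, as claimed.

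The only genuinely non-formal point — and the step I would flag as the main obstacle — is justifying the product formula for $Z(x)$, i.e. that the number of length-$n$ polynomial generators of $(\Q\langle Z\rangle,\star)$ really is the number of length-$n$ Lyndon words, \emph{and} that these generators may be taken homogeneous of the stated length so that the grading matches. The cleanest way is to note that the proof of Corollary \ref{blockLyndon} produces, for each Lyndon word $w$ of length $l$, a $\star$-polynomial generator whose only term of top length is $w$ itself (all other terms have strictly smaller length), so passing to $\gr$ with respect to the length filtration turns $\{\Psi_{\tanh}(w):w\text{ Lyndon}\}$ into a genuine set of homogeneous polynomial generators graded by length; since $(\Q\langle Z\rangle,\sh)$ is free polynomial on Lyndon words of each length and $\Psi_{\tanh}$ preserves the length filtration, $\gr(\Q\langle Z\rangle,\star)$ and $(\Q\langle Z\rangle,\sh)$ have the same Hilbert series in the length grading, which gives the displayed product. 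Everything after that is the routine logarithmic-derivative-and-Möbius computation above.
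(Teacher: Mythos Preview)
Your proposal is correct and follows essentially the same route as the paper: both obtain the product formula $Z(x)=\prod_{n\geq 1}(1-x^n)^{-L_n}$ from Corollary \ref{blockLyndon}, then take the logarithmic derivative and apply M\"obius inversion. The paper's proof is extremely terse (it simply cites Hoffman's Proposition 2.7 for the product formula), whereas you spell out the length-filtration argument to justify that formula; this extra care is reasonable but not a different method.
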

\begin{proof}
As in Proposition 2.7 of \cite{hoffmanquasi1}, we must have $Z(x)=\prod_{n\geq 1}(1-x^n)^{-L_n}$, following Corollary \ref{blockLyndon}. Taking logarithms, differentiating, and applying M{\"o}bius inversion gives the result.
\end{proof}

\section{Applications to multiple zeta values}

Following the work of Hoffman and Ihara, we prove some additional properties of $\Psi_{\tanh}$, which will find application to multiple zeta values in the following section. 

We first recall one of their results, specialised to the case of block shuffle. In all that follows, $\lambda$ is a formal parameter, and we extend $\Psi_f$ by $\Psi_f(\lambda)=\lambda$.

\begin{defn}
Define $\lozenge:\Q Z\otimes \Q Z\to\Q Z$ by $z_m\lozenge z_n:=z_{m+n}$. Then, for any $f(z)=c_1z+c_2z^2+\cdots$, define
$$f_\bullet(\lambda w):= \sum_{i=1}^\infty \lambda^ic_iw^{\bullet i}$$
for $\bullet\in\{\sh,\bsh\}$ and $w\in\Q\langle Z\rangle$, or $\bullet=\lozenge$ and $w\in\Q Z$.
\end{defn}

\begin{remark}
In a slight abuse of notation, we shall write $\exp_\bullet(w)$ for $1+f_\bullet(w)$ where $f(z)=e^z-1$; and $\log_\bullet(1+w)$ for $f_\bullet(w)$, where $f(z)=\log(1+z)$, and similarly for $\tanh^{-1}_\bullet(1+w)$. Note that 
$$log_\bullet(\exp_\bullet(\lambda w))=\lambda w\text{ and } \exp_\bullet(\log_\bullet(1+\lambda w))=1+\lambda w.$$
\end{remark}

\begin{prop}[Theorem 5.1 \cite{hoffmanquasi2}]\label{hoffmangeometric}
For any $f(z)=c_1z+c_2z^2+\cdots$ and $z\in\Q Z[[\lambda]]$,
$$\Psi_f\left(\frac{1}{1-\lambda z}\right) = \frac{1}{1-f_\lozenge(\lambda z)}.$$
\end{prop}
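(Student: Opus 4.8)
The plan is to exploit the fact, recorded in the remark preceding the statement, that $\Psi_f$ is built from $f$ in a way compatible with composition, together with the explicit combinatorial definition of $\Psi_f$ on words. Concretely, $\frac{1}{1-\lambda z}=\sum_{n\geq 0}\lambda^n z^n$, where $z^n$ means the $n$-fold concatenation of the letter-level element $z\in \Q Z$ (a word of length $n$). So the left-hand side is $\sum_{n\geq 0}\lambda^n \Psi_f(z^n)$, and I need to show this equals $\sum_{m\geq 0} f_\lozenge(\lambda z)^{\sh m}$. The first move is to reduce to the case $z=z_a$ a single letter: since $\Psi_f$ and $f_\lozenge$ are linear and everything is a formal power series in $\lambda$, and since the statement as phrased already fixes $\lozenge$ to be $z_m\lozenge z_n=z_{m+n}$, one can either argue termwise or simply note that $z^n$ only ever gets hit by the composition-bracket operation, which on a monomial word $z_a z_a\cdots z_a$ produces $z_{ka}$-type letters; I would keep $z$ general but observe that the only structural input is that $[z^{i_1}][z^{i_2}]\cdots[z^{i_l}]$ is again a word all of whose letters are $\lozenge$-powers of the letters of $z$.

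The heart of the computation is to unwind $\Psi_f(z^n)$ using its definition:
\[
\Psi_f(z^n)=\sum_{(i_1,\dots,i_l)\in\mathcal{C}(n)} c_{i_1}\cdots c_{i_l}\,(i_1,\dots,i_l)[z^n]
=\sum_{(i_1,\dots,i_l)\in\mathcal{C}(n)} c_{i_1}\cdots c_{i_l}\,[z^{i_1}][z^{i_2}]\cdots[z^{i_l}].
\]
Now sum against $\lambda^n$ and reorganise the double sum over $n$ and over compositions of $n$ into a sum over the length $l$ and over the parts $(i_1,\dots,i_l)$ with no constraint on their total:
\[
\sum_{n\geq 0}\lambda^n\Psi_f(z^n)
=\sum_{l\geq 0}\ \sum_{i_1,\dots,i_l\geq 1} \bigl(c_{i_1}\lambda^{i_1}[z^{i_1}]\bigr)\bigl(c_{i_2}\lambda^{i_2}[z^{i_2}]\bigr)\cdots\bigl(c_{i_l}\lambda^{i_l}[z^{i_l}]\bigr),
\]
where juxtaposition on the right is concatenation of words, i.e. the (associative) degree-respecting product on $\Q\langle Z\rangle$. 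For each fixed $l$ this inner sum factors as the $l$-fold concatenation power of $\sum_{i\geq 1} c_i\lambda^i[z^i] = \sum_{i\geq 1}c_i\lambda^i z^{\lozenge i} = f_\lozenge(\lambda z)$, since $[z^i]=z\lozenge z\lozenge\cdots\lozenge z = z^{\lozenge i}$ is exactly the letter appearing in the definition of $f_\lozenge$. Hence the whole expression is $\sum_{l\geq 0} f_\lozenge(\lambda z)^{\cdot l}$, where $\cdot$ is concatenation. The last step is to replace concatenation by the shuffle product: one shows $\sum_{l\geq 0} w^{\cdot l} = \sum_{l\geq 0}\tfrac{1}{l!}w^{\sh l}$ whenever $w$ has no constant term — this is the classical identity that the geometric series and the exponential series agree as generating series for ``ordered versus unordered'' arrangements, i.e. $\sum_l w^{\sh l}/l!$ counts ways to interleave $l$ copies of $w$, and summing the multinomial coefficients recovers $\sum_l w^{\cdot l}$; equivalently $\tfrac{1}{1-w}$ in the concatenation algebra maps to $\exp_\sh(w)$ under the standard identification, and in the paper's notation $\exp_\sh$ is exactly what ``$\tfrac{1}{1-f_\lozenge(\lambda z)}$'' abbreviates via the remark. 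Making that final bookkeeping match the paper's normalisation of $\tfrac{1}{1-(-)}$ on the shuffle side is the one place to be careful.

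The main obstacle is precisely that last identification: verifying that the paper's shorthand $\frac{1}{1-f_\lozenge(\lambda z)}$ on the shuffle side really denotes $\sum_{l\geq 0} f_\lozenge(\lambda z)^{\sh l}$ normalised to agree with the concatenation geometric series $\sum_{l\geq 0} f_\lozenge(\lambda z)^{\cdot l}$ — in other words, confirming that $\Psi_f$ being an algebra map $(\Q\langle Z\rangle,\sh)\to(\Q\langle Z\rangle,\star)$ (Proposition \ref{Tanh}, specialised) is consistent with reading $\frac{1}{1-(-)}$ as a shuffle-geometric series on the source. Once the interpretation of the symbols is pinned down, the computation above is purely formal: expand, interchange the order of summation over compositions, factor the geometric series, and recognise $f_\lozenge$. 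I expect no analytic subtlety since everything lives in $\Q\langle Z\rangle[[\lambda]]$ and each coefficient of $\lambda^n$ is a finite sum.
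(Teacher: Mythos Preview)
The paper does not give its own proof of this proposition: it is simply quoted as Theorem~5.1 of \cite{hoffmanquasi2}. So there is nothing to compare against directly, and I will just assess your argument on its own.

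Your core computation is correct and is exactly the standard argument. Expanding $\Psi_f(z^n)$ as a sum over compositions, summing over $n$, and interchanging to obtain
\[
\sum_{n\geq 0}\lambda^n\Psi_f(z^n)=\sum_{l\geq 0}\Bigl(\sum_{i\geq 1}c_i\lambda^i z^{\lozenge i}\Bigr)^{\cdot l}=\sum_{l\geq 0}f_\lozenge(\lambda z)^{\cdot l}
\]
is already the whole proof. At this point you are done: in the paper's conventions (see the computation in the proof of Lemma~\ref{expTanh}, where $\exp_{\sh}(\lambda z)=\sum_n\lambda^n z^n$), the symbol $\frac{1}{1-w}$ for $w\in\Q Z[[\lambda]]$ denotes precisely the \emph{concatenation} geometric series $\sum_{l\geq 0}w^{\cdot l}$. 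There is no shuffle product on the right-hand side to be reconciled.

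Your final paragraph, where you worry about converting concatenation powers to shuffle powers, is therefore unnecessary and introduces an error. The claimed identity $\sum_{l\geq 0}w^{\cdot l}=\sum_{l\geq 0}\frac{1}{l!}w^{\sh l}$ is \emph{false} for general $w$ with no constant term; it only holds when $w$ is letter-valued (i.e.\ lies in $\Q Z[[\lambda]]$), because then $w^{\sh l}=l!\,w^{\cdot l}$. That special case does apply here since $f_\lozenge(\lambda z)\in\Q Z[[\lambda]]$, but you never actually need it: both sides of the proposition are concatenation geometric series in letter-valued arguments, and your step~5 already matches the right-hand side verbatim. Drop the shuffle discussion entirely and the proof is clean.
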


We also need a modification of a lemma due to Hoffman and Ihara.

\begin{lem}\label{expTanh}
For $z\in\Q Z[[\lambda]]$
$$\exp_{\bsh}(\lambda z) = \Psi_{\tanh}\left(\frac{1}{1-\lambda z}\right).$$
\end{lem}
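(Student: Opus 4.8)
The plan is to reduce Lemma \ref{expTanh} to Proposition \ref{hoffmangeometric} by exhibiting the correct relationship between $\exp_{\bsh}$, $\tanh^{-1}_\lozenge$, and the geometric series, all of which live in the simple commutative world of $\Q Z[[\lambda]]$ once $\Psi_{\tanh}$ has been applied. First I would apply Proposition \ref{hoffmangeometric} with $f = \tanh$, obtaining
\[
\Psi_{\tanh}\left(\frac{1}{1-\lambda z}\right) = \frac{1}{1 - \tanh_\lozenge(\lambda z)}.
\]
So the lemma is equivalent to the purely $\lozenge$-algebraic identity $\exp_{\bsh}(\lambda z) = \frac{1}{1 - \tanh_\lozenge(\lambda z)}$. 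The right-hand side still contains a $\bsh$-series, so the next step is to understand $\exp_{\bsh}$ on elements of $\Q Z[[\lambda]]$.

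The key observation I would use is that for $z \in \Q Z$ (a single letter-combination, not a longer word), the $\bsh$-product of $z$ with itself collapses in a controlled way because of the subtraction term $-L_{a\lozenge b}$ in Definition \ref{gqshuff}: computing $z \bsh z$, $z\bsh z\bsh z$, etc., one finds these are explicit $\Q$-linear combinations of $z^{\sh k}$ and of $\lozenge$-powers $z^{\lozenge j}$ appearing at the front. More precisely I expect the generating-function identity
\[
\sum_{k\geq 0} \lambda^k \, z^{\bsh k} \;=\; \Psi_{\tanh}\!\left(\sum_{k\geq 0}\lambda^k z^{\sh k}\right)\Big/(\text{correction}),
\]
but rather than guess the correction, the cleaner route is: apply $\Psi_{\tanh^{-1}}$ (an algebra homomorphism $(\Q\langle Z\rangle,\bsh)\to(\Q\langle Z\rangle,\sh)$ by Proposition \ref{tanhmorphism}) to $\exp_{\bsh}(\lambda z)$. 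Since $\Psi_{\tanh^{-1}}$ is a homomorphism from $\bsh$ to $\sh$ and sends $\lambda \mapsto \lambda$, we get $\Psi_{\tanh^{-1}}(\exp_{\bsh}(\lambda z)) = \exp_{\sh}(\Psi_{\tanh^{-1}}(\lambda z)) = \exp_{\sh}(\tanh^{-1}_\lozenge(\lambda z))$, using that $\Psi_{\tanh^{-1}}$ restricted to single letters $\Q Z$ is exactly $(\tanh^{-1})_\lozenge$ (this is immediate from the definition of $\Psi_f$, since on words of length one only the composition $(1)$ contributes, with coefficient $b_1$, and one iterates). Now $\exp_{\sh}(\tanh^{-1}_\lozenge(\lambda z))$ is a genuine shuffle exponential of an element of $\Q Z[[\lambda]]$; since for $w\in\Q Z$ one has $w^{\sh k}=k!\,w^{\cdot k}$ reduces the shuffle exponential to an ordinary exponential in the commutative subalgebra, giving $\exp_{\sh}(\tanh^{-1}_\lozenge(\lambda z)) = \exp(\tanh^{-1}(\lambda z))_\lozenge = \frac{1}{1-\lambda z}\cdot(1+\lambda z)^{?}$ — more simply, $e^{\tanh^{-1}(t)} = \sqrt{\tfrac{1+t}{1-t}}$, so this equals $\left(\tfrac{1+\lambda z}{1-\lambda z}\right)_\lozenge^{1/2}$.

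Applying $\Psi_{\tanh}$ to both sides then yields $\exp_{\bsh}(\lambda z) = \Psi_{\tanh}\!\left(\left(\tfrac{1+\lambda z}{1-\lambda z}\right)^{1/2}_\lozenge\right)$, and it remains to identify this with $\Psi_{\tanh}(1/(1-\lambda z))$. At this point I would either (a) invoke Proposition \ref{hoffmangeometric} again in the form $\Psi_{\tanh}(g_\lozenge(\lambda z)) = (g\circ\tanh)_\lozenge(\ldots)$ type composition identities to check both sides produce $1/(1-\tanh_\lozenge(\lambda z))$, using the elementary scalar identity $\sqrt{\tfrac{1+\tanh u}{1-\tanh u}} = e^u$ evaluated suitably, or (b) more robustly, recognise that Proposition \ref{hoffmangeometric} extends from the geometric series $1/(1-\lambda z)$ to any power series $g_\lozenge(\lambda z)$ by linearity and the composition rule $\Psi_f \circ g_\lozenge = (g \circ f^{-1})_\lozenge \circ \Psi_f$ on $\Q Z$-valued series — which is the heart of Hoffman--Ihara's argument — and then the scalar computation $\sqrt{(1+t)/(1-t)}$ composed appropriately gives the geometric series. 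The main obstacle I anticipate is bookkeeping the exact normalisation: pinning down that $\Psi_{\tanh^{-1}}$ really does act as $(\tanh^{-1})_\lozenge$ on $\Q Z[[\lambda]]$ (rather than with an extra combinatorial factor from longer compositions leaking in), and correctly handling that $\exp_{\bsh}$ of an element of $\Q Z$ need \emph{not} stay in $\Q Z$ — so the identity genuinely lives in all of $\Q\langle Z\rangle[[\lambda]]$ and the two sides must be compared there, with $\Psi_{\tanh}$ being the isomorphism that makes the comparison legal. Once the scalar identity $e^{\tanh^{-1}(t)}$ versus $1/(1-\tanh(\cdot))$ is lined up, the rest is formal.
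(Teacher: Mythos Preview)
Your core strategy --- push the $\bsh$-exponential through the algebra isomorphism to turn it into a $\sh$-exponential, then recognise the shuffle exponential of a letter as a geometric series --- is exactly what the paper does. But you make a concrete miscomputation that sends you on an unnecessary detour, and once it is corrected the whole $\sqrt{(1+t)/(1-t)}$ discussion evaporates.

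The error is the claim that $\Psi_{\tanh^{-1}}$ restricted to $\Q Z$ equals $(\tanh^{-1})_\lozenge$. It does not. By definition, for a length-one word $z\in Z$ the only composition of $1$ is $(1)$, its coefficient is $b_1=1$, and $(1)[z]=z$; there is nothing to ``iterate''. Hence $\Psi_{\tanh^{-1}}\big|_{\Q Z}=\id$ (and likewise $\Psi_{\tanh}\big|_{\Q Z}=\id$, since $\tanh$ also has $c_1=1$). With this in hand your own chain of equalities collapses immediately:
\[
\Psi_{\tanh^{-1}}\bigl(\exp_{\bsh}(\lambda z)\bigr)
 = \exp_{\sh}\bigl(\Psi_{\tanh^{-1}}(\lambda z)\bigr)
 = \exp_{\sh}(\lambda z)
 = \sum_{n\ge 0}\frac{\lambda^n z^{\sh n}}{n!}
 = \sum_{n\ge 0}\lambda^n z^{n}
 = \frac{1}{1-\lambda z},
\]
and applying $\Psi_{\tanh}$ gives the lemma. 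This is precisely the paper's proof (stated there with $\Psi_{\tanh}$ rather than its inverse), and it uses neither Proposition~\ref{hoffmangeometric} nor any identity for $e^{\tanh^{-1}(t)}$.

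A second, related confusion appears when you write $\exp_{\sh}(\tanh^{-1}_\lozenge(\lambda z)) = \exp(\tanh^{-1}(\lambda z))_\lozenge$: the shuffle power of an element $w\in\Q Z$ satisfies $w^{\sh n}=n!\,w^n$ where $w^n$ is the \emph{concatenation} power (a word of length $n$ in $\Q\langle Z\rangle$), not the $\lozenge$-power. So $\exp_{\sh}(w)=\tfrac{1}{1-w}$ as a geometric series in the free algebra, not an element of $\Q Z[[\lambda]]$; the $\lozenge$-product plays no role at this step.
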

\begin{proof}
Since $\Psi_{\tanh}:(\Q\langle Z\rangle, \sh)\to(\Q\langle Z\rangle,\bsh)$ is an algebra isomorphism, we must have that $\Psi_{\tanh}\circ f_{\sh} = f_{\bsh}\circ\Psi_{\tanh}$. Thus, as $\Psi_{\tanh}|_{\Q Z}=id$, we have
$$exp_{\bsh}(\lambda z)=exp_{\bsh}(\Psi_{\tanh}(\lambda z))=\Psi_{\tanh}(exp_{\sh}(\lambda z))=\Psi_{\tanh}\left(\frac{1}{1-\lambda z}\right)$$
where we have used that 
$$exp_{\sh}(\lambda z)=\sum_{n=0}^\infty\lambda^n\frac{z^{\sh n}}{n!}=\sum_{n=0}^\infty\lambda^n\frac{n!z^{n}}{n!}=\sum_{n=0}^\infty\lambda^nz^n.$$
\end{proof}
Thus we can show the following

\begin{prop}\label{expgeo}
For $z\in\Q A[[\lambda]]$
$$exp_{\bsh}(\tanh^{-1}_\lozenge(1+\lambda z))=\frac{1}{1-\lambda z}.$$
\end{prop}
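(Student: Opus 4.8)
The plan is to chain together Lemma~\ref{expTanh}, Proposition~\ref{hoffmangeometric}, and the elementary fact that $\tanh$ and $\tanh^{-1}$ are mutually inverse formal power series. Set $w := \tanh^{-1}_\lozenge(1+\lambda z)$. Since $\tanh^{-1}(x)=x+\tfrac13 x^3+\cdots$ vanishes at the origin, $w=\sum_{i\geq 1}\tfrac{1}{i}\lambda^i z^{\lozenge i}$ lies in $\Q Z[[\lambda]]$ and has no constant term; equivalently $w=\lambda w'$ with $w'\in\Q Z[[\lambda]]$. This is precisely what makes the substitutions below legal: the cited statements each take an argument of the shape ``$\lambda z$'', and the geometric series appearing converge $\lambda$-adically exactly because the elements fed into them have positive $\lambda$-valuation.

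First I would apply Lemma~\ref{expTanh} with $\lambda z$ replaced by $w$ (i.e. with $z$ replaced by $w'=w/\lambda$), obtaining
\[
\exp_{\bsh}(w)=\Psi_{\tanh}\!\left(\frac{1}{1-w}\right).
\]
Next I would apply Proposition~\ref{hoffmangeometric} with $f=\tanh$ and again with $\lambda z$ replaced by $w$, which gives
\[
\Psi_{\tanh}\!\left(\frac{1}{1-w}\right)=\frac{1}{1-\tanh_\lozenge(w)}.
\]
Combining the two displays reduces the proposition to the single identity $\tanh_\lozenge\bigl(\tanh^{-1}_\lozenge(1+\lambda z)\bigr)=\lambda z$.

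That last identity is the only step that is not a direct citation, and it is the ``main obstacle'' only in a very mild sense. It is the composition law $g_\lozenge\circ f_\lozenge=(g\circ f)_\lozenge$ for formal power series $f,g$ with no constant term, specialised to $g=\tanh$, $f=\tanh^{-1}$, so that $g\circ f=\mathrm{id}$ and hence $\mathrm{id}_\lozenge(\lambda z)=\lambda z$. I would prove the composition law by a one-line formal manipulation: writing $f_\lozenge(\lambda z)=\sum_j a_j\lambda^j z^{\lozenge j}$ and expanding
\[
g_\lozenge\bigl(f_\lozenge(\lambda z)\bigr)=\sum_i b_i\Bigl(\sum_j a_j\lambda^j z^{\lozenge j}\Bigr)^{\lozenge i},
\]
the relation $z^{\lozenge j_1}\lozenge\cdots\lozenge z^{\lozenge j_i}=z^{\lozenge(j_1+\cdots+j_i)}$ collapses the coefficient of $\lambda^n z^{\lozenge n}$ to exactly the $n$-th Taylor coefficient of $g\circ f$; this is the same computation underlying the Remark's assertions for $\exp_\bullet$ and $\log_\bullet$, and the reader may equally well simply cite that Remark. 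Beyond this, the only thing that needs care is bookkeeping: keeping straight the ``$1+$'' conventions attached to $\exp_\bullet$, $\log_\bullet$, and $\tanh^{-1}_\bullet$, and checking at each step that the argument handed to a $\bullet$-operation has positive $\lambda$-valuation and (for Proposition~\ref{hoffmangeometric}) lies in $\Q Z[[\lambda]]$ rather than merely $\Q\langle Z\rangle[[\lambda]]$ — which is automatic here since $w$ is built from $z$ using only $\lozenge$.
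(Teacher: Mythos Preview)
Your proof is correct and follows the same two-step route as the paper: Lemma~\ref{expTanh} turns $\exp_{\bsh}$ into $\Psi_{\tanh}$ of a geometric series, and then Proposition~\ref{hoffmangeometric} finishes. The paper's version is one step shorter: it applies Proposition~\ref{hoffmangeometric} with $f=\tanh^{-1}$ (rather than $f=\tanh$), obtaining $\Psi_{\tanh^{-1}}\bigl(\tfrac{1}{1-\lambda z}\bigr)=\tfrac{1}{1-\tanh^{-1}_\lozenge(\lambda z)}$, and then applies $\Psi_{\tanh}$ to both sides using the already-cited fact $\Psi_{\tanh}\circ\Psi_{\tanh^{-1}}=\mathrm{id}$ --- so the $\lozenge$-composition law you isolate as the ``main obstacle'' never needs to be stated separately. (A harmless slip: your expansion of $w$ should have only odd-index terms, since $\tanh^{-1}(x)=\sum_{k\geq 0}\tfrac{x^{2k+1}}{2k+1}$, not $\sum_{i\geq 1}\tfrac{x^i}{i}$.)
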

\begin{proof}
By Lemma \ref{expTanh}, this is equivalent to showing that
$$\Psi_{\tanh}\left(\frac{1}{1-\tanh^{-1}_\lozenge(1+\lambda z)}\right)=\frac{1}{1-\lambda z}.$$
However, this follows immediately from the statement of Proposition \ref{hoffmangeometric} for $f=\tanh^{-1}$.
\end{proof}

\begin{cor}
Recall we have a surjective linear map $\ic:\Q\langle Z\rangle \to \mathcal{Z}$, mapping a word to its corresponding iterated integral. Denote by $L(Z)$ the $\Q$-span of the set of Lyndon words in $Z$. Then, assuming Conjecture \ref{fullblockshuff}, $\mathcal{Z}=\ic(L(Z))$.
\end{cor}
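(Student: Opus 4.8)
The plan is to play the free-polynomial structure of $(\Q\langle Z\rangle,\bsh)$ off against the vanishing of $\ic$ on $\bsh$-products. Write $\mathcal{I}\subseteq\Q\langle Z\rangle$ for the augmentation ideal (the $\Q$-span of all non-empty words) and $\mathcal{I}^{\bsh 2}$ for the $\Q$-span of all products $a\bsh b$ with $a,b\in\mathcal{I}$. By Corollary~\ref{blockLyndon}, $(\Q\langle Z\rangle,\bsh)$ is the free polynomial algebra on the Lyndon words; grading it by the number of Lyndon factors and restricting to $\mathcal{I}$ gives a decomposition of $\Q$-vector spaces $\mathcal{I}=L(Z)\oplus\mathcal{I}^{\bsh 2}$, in which the degree-one piece is exactly the span $L(Z)$ of the Lyndon words and the part of degree at least $2$ is exactly $\mathcal{I}^{\bsh 2}$.

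With this in hand the argument is short, and I would carry it out in four steps. First, by Conjecture~\ref{fullblockshuff} together with the bilinearity of $\bsh$ and linearity of $\ic$, one has $\ic(\mathcal{I}^{\bsh 2})=0$. Second, feeding this into the decomposition gives $\ic(\mathcal{I})=\ic(L(Z)+\mathcal{I}^{\bsh 2})=\ic(L(Z))$. Third, since $\Q\langle Z\rangle=\Q\cdot 1\oplus\mathcal{I}$ and $\ic$ is surjective onto $\mathcal{Z}$, this yields $\mathcal{Z}=\Q\,\ic(1)+\ic(L(Z))$. Fourth, I would dispatch the constant term by noting that $\Q\subseteq\ic(L(Z))$ already: under the block isomorphism the word $e_0e_1$ forms a single block, hence corresponds to the single letter $z_2$, which is a Lyndon word, and $\ic(z_2)=\ic(0;1)=1$; equivalently one may simply choose the normalisation of Conjecture~\ref{fullblockshuff} with $\ic(1)=0$. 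Combining the four steps gives $\mathcal{Z}=\ic(L(Z))$, the reverse inclusion $\ic(L(Z))\subseteq\mathcal{Z}$ being trivial.

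I do not expect a genuinely hard step here: the two substantive inputs, namely the free-polynomial description and the annihilation of $\bsh$-products, are precisely Corollary~\ref{blockLyndon} and Conjecture~\ref{fullblockshuff}, and the exponential/$\tanh^{-1}_\lozenge$ machinery of the preceding lemmas is not required. The points that need care are purely bookkeeping: that Corollary~\ref{blockLyndon} supplies the Lyndon words \emph{themselves} as a set of free polynomial generators, so that the span of the generators really is $L(Z)$ and the decomposition $\mathcal{I}=L(Z)\oplus\mathcal{I}^{\bsh 2}$ is legitimate; that Conjecture~\ref{fullblockshuff}, stated for monomials, propagates to all of $\mathcal{I}^{\bsh 2}$ by bilinearity (so the statement should be read as concerning non-empty words, since $1\bsh w=w$ would otherwise force $\ic\equiv 0$); and the handling of the unit described above.
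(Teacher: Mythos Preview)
Your proposal is correct and follows essentially the same approach as the paper: use Corollary~\ref{blockLyndon} to write any element as a $\bsh$-polynomial in Lyndon words, invoke Conjecture~\ref{fullblockshuff} to kill all terms of degree at least two, and conclude that only the linear (Lyndon) part survives under $\ic$. Your version is somewhat more careful in handling the unit and in making the decomposition $\mathcal{I}=L(Z)\oplus\mathcal{I}^{\bsh 2}$ explicit, but the underlying argument is identical.
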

\begin{proof}
Every word in $\Q\langle Z\rangle$ can be written as a $\bsh$-polynomial in the Lyndon words. Conjecture \ref{fullblockshuff} tells us that the image of any terms of degree greater than $1$ in this polynomial is $0$, and hence $\ic(w)$ is the image of the linear part, i.e. $\ic(w)\in\ic(L(Z))$.
\end{proof}

\begin{example}
This suggests that, in weight $5$, multiple zeta values are spanned by the (appropriately regularised) set
\[\{\z(2,2,1),\z(2,1,2),\z(3,2),\z(1,3,1),\z(3,1,1),\z(4,1),\z(5),\z(2,1,1,1),\z(1,1,1,1,1)\}.\]
Up to application of duality, this contains the Hoffman elements $\{\z(2,1,2)=\z(2,3),\z(3,2)\}$, which are known to be a linear spanning set by the work of Brown \cite{brownmixedtate}. Indeed, in general $\ic(L(Z))$ contains the block degree one part of Hoffman spanning set (up to duality), but in higher block degree gives alternative generating elements. Note also that this does not give us a basis, nor is it currently known how to extract a basis from this collection.
\end{example}

\begin{cor}\label{quasipower}
For any $z\in\Q Z$ and any $n>1$,
$$\ic(z^{2k+1})=\frac{1}{2k+1}\ic(z^{\lozenge 2k+1})\in\ic\left(\Q Z\right) \subset\Q[\pi^2]$$
and 
$$\ic(z^{2k})=0$$
for all $z\in\Q Z$.
\end{cor}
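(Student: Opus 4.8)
The plan is to read off both identities directly from the generating-function identity of Proposition~\ref{expgeo}, by applying the linear functional $\ic$ and using Proposition~\ref{fullblockshuff} to kill all higher $\bsh$-powers. Fix $z\in\Q Z$, regarded as a $\lambda$-independent element of $\Q Z[[\lambda]]$, and set $w:=\tanh^{-1}_\lozenge(1+\lambda z)$. Since $\tanh^{-1}(x)=\sum_{j\ge 0}x^{2j+1}/(2j+1)$ is supported on odd degrees,
\[ w \;=\; \sum_{j\ge 0}\frac{\lambda^{2j+1}}{2j+1}\,z^{\lozenge(2j+1)}, \]
a power series in $\lambda$ with coefficients in $\Q Z$ and with vanishing constant term, while Proposition~\ref{expgeo} reads $\exp_{\bsh}(w)=\frac{1}{1-\lambda z}=\sum_{n\ge 0}\lambda^n z^{n}$, the $z^n$ on the right being $n$-fold concatenation powers.

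Next I would apply $\ic$, extended $\Q[[\lambda]]$-linearly, to both sides. Writing $\exp_{\bsh}(w)=1+w+\sum_{i\ge 2}\tfrac1{i!}w^{\bsh i}$, each term $w^{\bsh i}$ with $i\ge 2$ has the shape $w\bsh v$, hence is annihilated by $\ic$ by Proposition~\ref{fullblockshuff}. Thus $\ic\bigl(\exp_{\bsh}(w)\bigr)=\ic(1)+\ic(w)$, and so
\[ \ic(1)+\sum_{j\ge 0}\frac{\lambda^{2j+1}}{2j+1}\,\ic\bigl(z^{\lozenge(2j+1)}\bigr) \;=\; \sum_{n\ge 0}\lambda^n\,\ic(z^n). \]
The constant terms coincide; comparing coefficients of $\lambda^{2k}$ (for $k\ge 1$) gives $\ic(z^{2k})=0$, and comparing coefficients of $\lambda^{2k+1}$ gives $\ic(z^{2k+1})=\tfrac1{2k+1}\ic(z^{\lozenge(2k+1)})$.

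Finally I would note that $z^{\lozenge(2k+1)}\in\Q Z$, so $\ic(z^{2k+1})\in\ic(\Q Z)$, and then establish the inclusion $\ic(\Q Z)\subseteq\Q[\pi^2]$: under the block isomorphism a single letter $z_N$ is the unique alternating word of length $N$ starting with $e_0$; if $N$ is odd this word also ends with $e_0$, so $\ic(z_N)$ is an iterated integral of the form $\ic(0;\dots;0)$ and vanishes, whereas if $N=2m+2$ the word is $e_0(e_1e_0)^m e_1$, which represents $\phi(\{2\}^m)$, so $\ic(z_N)=(-1)^m\zeta(\{2\}^m)=(-1)^m\pi^{2m}/(2m+1)!\in\Q[\pi^2]$ by Euler's evaluation. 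Extending linearly gives $\ic(\Q Z)\subseteq\Q[\pi^2]$.

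Since the substantive ingredients — annihilation of $\bsh$-products by $\ic$, and the exponential/geometric-series identity — are precisely Propositions~\ref{fullblockshuff} and~\ref{expgeo}, the argument is essentially bookkeeping. The main point requiring care is keeping track of the Taylor coefficients of $\tanh^{-1}$, in particular that they vanish in even degree, which is exactly what produces the vanishing $\ic(z^{2k})=0$; the only other delicate item is the standard regularisation convention controlling the letter $z_1$, which the framework already handles. I do not expect a genuine obstacle.
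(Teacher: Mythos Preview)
Your proposal is correct and follows essentially the same approach as the paper: apply $\ic$ to both sides of Proposition~\ref{expgeo}, use Proposition~\ref{fullblockshuff} to kill the $\bsh$-powers of order $\ge 2$, and compare coefficients of $\lambda$. Your write-up is in fact more complete than the paper's, since you explicitly expand $\tanh^{-1}_\lozenge$ and also justify the final inclusion $\ic(\Q Z)\subseteq\Q[\pi^2]$ via $\ic(z_{2m+2})=(-1)^m\zeta(\{2\}^m)$, a step the paper leaves implicit.
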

\begin{proof}
Taking the image of the equality in Proposition \ref{expgeo}, we obtain
$$\ic\left(1+ \tanh^{-1}_\lozenge(1+\lambda z)+\sum_{n\geq 2}\frac{\tanh^{-1}_\lozenge(\lambda z)^{\bsh n}}{n!}\right)=\sum_{n\geq 0}\lambda^n\ic(z^n).$$
As $\ic$ kills $\bsh$-products, the left hand side is just 
$$\sum_{k\geq 0}\frac{\lambda^{2k+1} z^{\lozenge 2k+1}}{2k+1}$$
The result follows upon comparing coefficients of $\lambda^n$.
\end{proof}

\begin{remark}
As $\ic(w)=0$ if the length of $w$ and the weight of $w$ are of the same parity, we see that the projection of $z$ onto $\bigoplus_{i=1}^\infty \Q z_{2i}$ must be non-zero for the statement to be non-trivial.
\end{remark}

Assuming Conjecture \ref{fullblockshuff}, we can produce the following examples of explicit identities, many of which are known to be true.

\begin{ex}
Consider $z=z_2$, then $\ic(z_2^{2k+1})=\frac{1}{2k+1}\ic(z_{4k+2})$. This is a well known result from the work of Borwen, Bradley, Broadhurst, and Lisonek \cite{bbbl}:
$$\z(\{1,3\}^k)=\frac{\z(\{2\}^{2k})}{2k+1}=\frac{2\pi^{4k}}{(4k+2)!},$$ 
where $\{n_1,\ldots,n_r\}^k$ is the sequence consisting of $n_1,\ldots,n_r$ repeated $k$ times. Similarly, Theorem 2 of \cite{bbbl} tells us that $\z(2\sh\{1,3\}^n)=\frac{\pi^{4n+2}}{(4n+3)!}$ and taking $z=z_2+z_4$, and considering the weight $4n+2$ part of $z^{2n+1}$, we obtain precisely this. 

More generally, by considering the weight $4n+2k$ part of $z^{2n+1}$ for $z=z_2+z_4+\cdots+ z_{2k+2}$, we obtain another result due to Bowman and Bradley \cite{bowmanbradley02}.

\begin{thm}\label{213shuff}
For all non-negative $n,k$.
\[\z(\{2\}^k\sh\{1,3\}^n)=\frac{\pi^{4n+2k}}{(2n+1)(4n+2k+1)!}\binom{2n+k}{k}\]
\end{thm}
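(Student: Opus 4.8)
The plan is to derive Theorem \ref{213shuff} from Corollary \ref{quasipower} by a careful bookkeeping of weights and lengths. Take $z = z_2 + z_4 + \cdots + z_{2k+2} \in \Q Z$ and apply the identity $\ic(z^{2n+1}) = \tfrac{1}{2n+1}\ic(z^{\lozenge 2n+1})$. On the right-hand side, $z^{\lozenge 2n+1}$ is a single letter: since $z_a \lozenge z_b = z_{a+b}$, we have $z^{\lozenge 2n+1} = \sum z_{2j_1 + \cdots + 2j_{2n+1}}$ summed over $j_1,\ldots,j_{2n+1} \in \{1,\ldots,k+1\}$, so it is a sum of letters $z_m$ with $m$ ranging over even integers, weighted by the number of ways to write $m/2$ as an ordered sum of $2n+1$ parts each between $1$ and $k+1$. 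First I would isolate the graded piece of total weight $4n + 2k$ on both sides. On the left, this is the sum of all monomials $z_{a_1}\cdots z_{a_{2n+1}}$ with each $a_i$ even, $2 \le a_i \le 2k+2$, and $\sum a_i = 4n + 2k$; on the right it is $\tfrac{1}{2n+1}$ times the single term $z_{4n+2k}$ (one checks the weight-$(4n+2k)$ piece of $z^{\lozenge 2n+1}$ is a nonzero multiple of $z_{4n+2k}$, and in fact the only admissible letter, once we argue the multiplicity).

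Next I would translate both sides back to multiple zeta values via the block-decomposition isomorphism. The letter $z_{4n+2k}$ corresponds, after the normalisation fixed in Proposition \ref{fullblockshuff}, to a rational multiple of $\pi^{4n+2k}$, namely $\ic(z_{4n+2k}) = \pm\z(\{2\}^{2n+k-1}, \ldots)$ — more precisely I would use that a single block $z_{2m}$ gives $\ic(z_{2m}) = (-1)^{m}\z(\{2\}^{m-1})$ up to the fixed normalisation, hence equals a known multiple of $\pi^{2m}$ by Euler. Concretely $\z(\{2\}^{m-1}) = \pi^{2m-2}/(2m-1)!$-type formula must be plugged in with $2m = 4n+2k$. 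On the left-hand side, the monomials $z_{a_1}\cdots z_{a_{2n+1}}$ with $a_i$ even correspond under the block decomposition precisely to the words obtained by shuffling $n$ copies of the block pattern for $\z(1,3)$ with $k$ copies of the block pattern for $\z(2)$: a block $z_2$ is the "$\{1,3\}$"-type generator of block-length contributing the $(1,3)$ string, and adjacent even blocks concatenate correctly. I would verify this correspondence carefully — it is essentially the statement that $\ic$ applied to the weight-graded piece of $z^{\bsh\,2n+1}$ for this particular $z$ reproduces the shuffle $\{2\}^k \sh \{1,3\}^n$, using that $\ic$ annihilates $\bsh$-products so only the "linear in $\bsh$" structure, i.e. the ordinary juxtaposition of blocks, survives.

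Finally I would collect the binomial factor. The coefficient $\binom{2n+k}{k}$ should emerge as the number of distinct interleavings contributing to a fixed target, or equivalently as the count of compositions of the weight into the prescribed block sizes — this is where the multiplicity in $z^{\lozenge 2n+1}$ (or equivalently in $z^{\bsh 2n+1}$, graded appropriately) must be computed: the number of ordered tuples $(a_1,\ldots,a_{2n+1})$ of even integers with $2 \le a_i \le 2k+2$ summing to $4n+2k$ equals the number of ways to distribute $k$ "extra units of $2$" among $2n+1$ slots, which is $\binom{2n+k}{k}$. Combining with the Euler evaluation of $\z(\{2\}^{2n+k-1})$ and the factor $\tfrac{1}{2n+1}$ yields $\tfrac{\pi^{4n+2k}}{(2n+1)(4n+2k+1)!}\binom{2n+k}{k}$. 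The main obstacle I anticipate is the bookkeeping in this last step: getting the normalisation constants exactly right (signs, the precise factorial denominator in Euler's $\zeta(\{2\}^m)$ formula, and the interaction with the normalisation fixed in Proposition \ref{fullblockshuff}), and rigorously justifying that the block-decomposition image of the weight-graded piece of $z^{2n+1}$ is literally $\{2\}^k \sh \{1,3\}^n$ rather than merely "agrees with it after applying $\ic$." I would handle the latter by induction on $n+k$, or by directly matching block decompositions: a maximal run of even blocks $z_{2},z_{2},\ldots$ corresponds to the string $(1,3,1,3,\ldots)$ and a single $z_2$ flanked appropriately gives $(2)$, so the generating function of admissible monomials is exactly $(\text{blocks for }2)^{\sh k} \sh (\text{blocks for }1,3)^{\sh n}$.
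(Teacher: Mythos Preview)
Your approach is exactly the paper's: choose $z=z_2+z_4+\cdots+z_{2k+2}$, apply Corollary~\ref{quasipower}, and extract the piece of MZV weight $4n+2k$. Two concrete corrections are needed before this becomes a proof.

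First, your description of the block--MZV dictionary is inverted. It is not that ``a single $z_2$ gives $(2)$'' and ``runs of $z_2$ give $(1,3,\ldots)$''; rather, the word $z_2^{2n+1}$ \emph{as a whole} encodes $\{1,3\}^n$, and replacing the $i$th letter $z_2$ by $z_{2j}$ inserts exactly $j-1$ copies of $2$ into the $i$th of the $2n+1$ gaps of $\{1,3\}^n$. With this corrected dictionary the bijection between monomials $z_{2j_1}\cdots z_{2j_{2n+1}}$ (with $j_i\ge 1$, $\sum (j_i-1)=k$) and the terms of $\{2\}^k\sh\{1,3\}^n$ is immediate and no induction is required.

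Second, your weights are off by $2$: the sum of $z$-indices in a block word exceeds the MZV weight by $2$ (the two endpoints of the iterated integral). Thus the relevant monomials satisfy $\sum a_i = 4n+2k+2$, not $4n+2k$; equivalently $\sum (j_i-1)=k$, and the multiplicity of $z_{4n+2k+2}$ in $z^{\lozenge\,2n+1}$ is $\binom{2n+k}{k}$ as you want. With these two fixes your computation goes through verbatim: $\ic(z_{4n+2k+2})=\z(\{2\}^{2n+k})=\pi^{4n+2k}/(4n+2k+1)!$, and the factor $\tfrac{1}{2n+1}$ from Corollary~\ref{quasipower} completes the formula.
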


By considering other weights and taking $z$ to be some other linear combination, we obtain that sums over certain subsets of the set of shuffles also evaluate to elements of $\Q[\pi^2]$.
\end{ex}

\begin{prop}\label{bunchsof2}
Let $S_{n,k,p}$ denote the set of words in $\{1,2,3\}$ appearing in the shuffle product $\{2\}^k\sh\{1,3\}^n$ containing at least one group of $p$ adjacent $2$s, and no group of $p+1$ adjacent $2$s. Then
$$\sum_{u\in S_{n,k,p}}\z(u)\in\Q\pi^{2k+4n}.$$
\end{prop}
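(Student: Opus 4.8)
The plan is to realize each sum $\sum_{u \in S_{n,k,p}}\z(u)$ as the image under $\ic$ of a suitable linear combination of powers $z^{2m+1}$ for carefully chosen $z \in \Q Z$, so that Corollary \ref{quasipower} forces the value to lie in $\Q[\pi^2]$, and then track the homogeneity (weight $2k+4n$) to conclude membership in $\Q\pi^{2k+4n}$. The starting point is the observation, already used in the examples, that for $z = z_2 + z_4 + \cdots + z_{2k+2}$ the weight-$(4n+2k)$ part of $z^{2n+1}$ expands (via the block-to-word dictionary and Proposition \ref{fullblockshuff}) into a sum of multiple zeta values indexed by words in $\{1,2,3\}$ that are precisely the shuffles $\{2\}^k \sh \{1,3\}^n$, each occurring with a combinatorial weight determined by how the $z_{2j}$'s group the adjacent $2$'s. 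Concretely, a block letter $z_{2j}$ with $j\ge 2$ corresponds to a run of $(j-1)$ consecutive $2$'s inside a $\{1,3\}$-skeleton, so a monomial in the expansion of $z^{2n+1}$ contributes to exactly one $S_{n,k,p}$ (the one recording its longest run of $2$'s); the coefficient with which it appears is a polynomial in the formal variables attached to $z_2,z_4,\dots,z_{2k+2}$.

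The key steps, in order: (1) make precise the bijection between block monomials of fixed weight appearing in $z^{\bsh(2n+1)}$ (equivalently $z^{2n+1}$ under the shuffle isomorphism, since $\ic$ kills higher $\bsh$-powers) and words in $\{2\}^k\sh\{1,3\}^n$, recording for each such word its profile of maximal runs of adjacent $2$'s; (2) introduce independent formal parameters $t_1,\dots,t_{k+1}$ and set $z = t_1 z_2 + t_2 z_4 + \cdots + t_{k+1} z_{2k+2}$, so that by Corollary \ref{quasipower} the weight-$(4n+2k)$ component of $\ic(z^{2n+1})$ equals $\tfrac{1}{2n+1}$ times the corresponding component of $\ic(z^{\lozenge(2n+1)})$, an explicit element of $\Q[\pi^2]$, and is simultaneously a polynomial in $t_1,\dots,t_{k+1}$ whose coefficients are the sums $\sum_{u} c_u(\text{profile}) \z(u)$; (3) show that by taking suitable $\Q$-linear combinations of the values at finitely many choices of $(t_1,\dots,t_{k+1})$ — or equivalently by extracting appropriate coefficients of the polynomial identity — one isolates exactly the sum over words whose \emph{longest} run of adjacent $2$'s has length $p$, namely $\sum_{u\in S_{n,k,p}}\z(u)$, as a $\Q$-linear combination of elements of $\ic(\Q Z)\subset \Q[\pi^2]$; (4) finally invoke homogeneity: every term on the right-hand side of the polynomial identity lies in weight $4n+2k$, and $\Q[\pi^2]$ in weight $2m$ is the line $\Q\pi^{2m}$, so the extracted sum lies in $\Q\pi^{4n+2k}$.

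The main obstacle I expect is step (3): verifying that the "longest run equals $p$" condition can genuinely be cut out by a $\Q$-linear operation on the family $\{\ic(z^{2n+1}) : z = \sum t_j z_{2j}\}$. The difficulty is combinatorial rather than analytic — one must understand how the multiset of run-lengths of $2$'s in a shuffle word $u \in \{2\}^k \sh \{1,3\}^n$ determines, and is determined by, the monomial(s) in $z^{2n+1}$ that produce $u$, and then check that the map from run-profiles to monomials in $t_1,\dots,t_{k+1}$ is "triangular enough" that an inclusion–exclusion over the parameters recovers the indicator of the longest-run statistic. A clean way to handle this is to specialize $t_j = 0$ for $j > p+1$ (kills all words with a run of length $> p$, leaving $\sum_{q\le p}\sum_{u\in S_{n,k,q}}\z(u)\cdot(\text{a monomial}(t_1,\dots,t_{p+1}))$), then take a further difference with the $t_{p+1}=0$ specialization to isolate the $p$-run contribution; one still has to argue the surviving combinatorial coefficients do not vanish, but this reduces to a positivity/multinomial computation, not a hard one. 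Once (3) is in place, steps (1), (2), (4) are routine given Proposition \ref{fullblockshuff} and Corollary \ref{quasipower}.
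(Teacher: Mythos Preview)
Your proposal is correct and follows essentially the same route as the paper. The paper's proof simply takes $z=z_2+\cdots+z_{2p+2}$ with all coefficients equal to $1$, applies Corollary~\ref{quasipower}, subtracts the same expression for $z'=z_2+\cdots+z_{2p}$, and reads off the weight-$(2k+4n)$ component --- this is exactly your ``clean way'' in step~(3) with the surviving $t_j$ set to $1$. Your formal parameters $t_1,\dots,t_{k+1}$ and the worry about non-vanishing coefficients are superfluous for the proposition as stated (all coefficients are $1$, so nothing can cancel), but they are precisely what the paper invokes in the Remark immediately following the proof to obtain the finer statement that each fixed run-profile $(i_1,\dots,i_{p+1})$ already yields an element of $\Q\pi^{2k+4n}$. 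One small wobble: in step~(1) you refer to ``$z^{\bsh(2n+1)}$ (equivalently $z^{2n+1}$ under the shuffle isomorphism\dots)''; the relevant object in Corollary~\ref{quasipower} is the concatenation power $z^{2n+1}$, and the block-shuffle plays no role beyond the fact that $\ic$ kills $\bsh$-products in the proof of that corollary.
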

\begin{proof}
Corollary \ref{quasipower} tells us that, for any $n,p\geq 1$ 
$$\ic((z_2+\cdots + z_{2p+2})^{2n+1})=\frac{1}{2n+1}\ic((z_2+\cdots+z_{2p+2})^{\lozenge 2n+1})$$
As $(z_2+\cdots+z_{2p+2})^{\lozenge 2n+1}\in\bigoplus_{i\geq 1}\Q z_{2i}$, and $\ic(z_{2i})=\z(\{2\}^{i-1})$, we must have that $\ic((z_2+\cdots + z_{2p+2})^{2n+1})\in\Q[\pi^2]$. To be precise
$$\ic((z_2+\cdots + z_{2p+2})^{2n+1})=\frac{1}{2n+1}\sum_{i\geq 0} |\mathcal{P}_{i+1,2n+1,p+1}| \z(\{2\}^{i})$$
where $\mathcal{P}_{k,N,r}$ denotes the set of compositions of $k$ into $N$ parts of size at most $r$. Hence
$$\ic((z_2+\cdots + z_{2p+2})^{2n+1})-\ic((z_2+\cdots+z_{2p})^{2n+1})\in\Q[\pi^2].$$
Letting $T_{n,k,p}$ be the set of all monomials of block degree $2n$, weight $2k+4n$, containing at least one $z_{2p+2}$, this is precisely the statement that
$$\sum_{w\in T_{n,k,p}}\ic(w)\in\Q\pi^{2k+4n}.$$
Translating this into the language of multiple zeta values, we note that every $z_{2k+2}$ corresponds to a group of exactly $k$ adjacent $2$s, and so elements of $T_{n,k,p}$ correspond exactly to elements of $S_{n,k,p}$. Thus, letting $\mathcal{P}_{k,N,r}^+$ denote the set of compositions of $k$ into exactly $N$ parts of size at most $r$ and containing at least one part of size $r$, we have
$$\sum_{u\in S_{n,k,p}}\z(u)=\frac{1}{2n+1} |\mathcal{P}_{k+2n+1,2n+1,p+1}^+|\z(\{2\}^{k})\in\Q\pi^{2k+2n}.$$
\end{proof}

\begin{ex} For example, if we take $z=z_4$, we obtain that 
$$\z(\{2,1,2,3\}^k,2)=\frac{\pi^{8k+2}}{(2k+1)(8k+3)!}=\frac{4\pi^{8k+2}}{(8k+4)!}.$$ 
This corresponds to $S_{k,k+1,1}$ and agrees with predictions made using the datamine \cite{datamine}.
\end{ex}

 \begin{remark}
We can actually refine this result significantly: by considering $z:=a_2z_2+\cdots+a_{2p+2}z_{2p+2}$, and allowing the $a_{2i}$ to vary freely, we see that we must have 
$$\sum_{u\in I_{i_1,\ldots,i_{p+1},n,w}}\ic(u)\in\Q\pi^w$$
where $I_{i_1,\ldots,i_{p+1},n,w}$ is the set of words of degree $2n+1$ and weight $w$ with $\\deg_{z_{2j}}(u)=i_j$ for $j\leq p+1$ and $\deg_{z_k}(u)=0$ for all other $k$. This implies
$$\sum_{(n_1,\ldots,n_k)\in J_{i_1,\ldots,i_{p+1},n,w}}\z(n_1,\ldots,n_k)\in\Q\pi^w$$
where $J_{i_1,\ldots,i_{p+1},n,w}$ is the set of tuples $(n_1,\ldots,n_k)$ with $n_i\in\{1,2,3\}$, satisfying the following:
\begin{enumerate}
    \item $n_1+\cdots+n_k=w$,
    \item Among the integers $n_1,\ldots,n_k$, exactly $n$ are 1 and exactly $n$ are 3,
    \item Omitting 2s, the sequence $(n_1,\ldots,n_k)$ becomes $(\{1,3\}^n)$,
    \item The sequence $(n_1,\ldots,n_k)$ contains exactly $i_j$ groups of exactly $j-1$ adjacent 2s, for $j>1$.
\end{enumerate}
For example, for $(i_1,i_2,n,w)=(2,1,3,6)$, we find that
\begin{align*}
    \ic(z_2z_2z_4)+\ic(z_2z_4z_2)+\ic(z_4z_2z_2) &= \z(1,3,2)+\z(1,2,3)+\z(2,1,3)\\
    &=\z(2,2,2)=\frac{\pi^6}{5040}.
\end{align*}
\end{remark}

\begin{remark}
Using the theory of mixed Tate motives, multiple zeta values may be lifted to motivic analogues, called motivic multiple zeta values \cite{brownmixedtate}. These may thought of a formal algebraic analogues of multiple zeta values, satisfying only relations coming from geometry. In particular, motivic multiple zeta values satisfy the standard transcendence conjectures for multiple zeta values such as being graded by weight, or algebraic independence of odd single zeta values. As Proposition \ref{fullblockshuff}  in fact holds for motivic multiple zeta values, then all of the above results also lift to motivic relations in keeping with the author's results showing that block shuffle relation holds motivically modulo terms of lower $z$-degree \cite{keilthyblock1}.
\end{remark}

\section{Proof of Proposition \ref{tanhmorphism}}

To the best of the author's knowledge, the easiest way to establish this is to show that the dual of $\Psi_{\tanh^{-1}}$ is a coalgebra homomorphism
\[\Phi: (\Q\langle Z\rangle, \Delta) \to (\Q\langle Z\rangle, \bDelta)\]
where the coproducts are given by the duals  of $\sh$ and $\star$ respectively:
\[\Delta(z) := z\otimes 1 + 1\otimes z \text{ for all }z\in Z\]
and
\[\bDelta(z) :=\sum_{k\geq 0} (-1)^k \sum_{\substack{z_{i_1},\ldots,z_{i_{2k+1}}\in Z\\ z_{i_1}\lozenge\cdots\lozenge z_{i_{2k+1}}=z}} z_{i_1}\ldots z_{i_k}\otimes z_{i_{k+1}}\ldots z_{i_{2k+1}} + z_{i_{k+1}}\ldots z_{i_{2k+1}}\otimes z_{i_1}\ldots z_{i_k}\]
for all $z\in Z$. We also have $\Delta(1)=\bDelta(1)=1\otimes 1$.
In particular, it suffices to show that $\Phi(z)$  is primitive with respect to $\bDelta$. Let us first compute $\Phi(z)$. Consider $\Q\langle Z\rangle$ as its own graded dual via the pairing
\[\langle u,v\rangle =\delta_{u,v}\]
for monomials $u,v$. Then
\begin{align*}
\Phi(z) &= \sum_{v}\langle \Phi(z),v\rangle v\\
&=\sum_v\langle z,\Psi_{\tanh^{-1}}(v)\rangle v
\end{align*}
Letting $v=z_{a_1}\ldots z_{a_n}$, we have that
\[\Psi_{\tanh^{-1}}(v)=\sum_{\substack{(i_1,\ldots,i_l)\in\mathcal{C}(n)\\ i_j \text{ odd}}}\frac{1}{i_1\ldots i_l}(i_1,\ldots,i_l)[v].\]
In particular, the only terms of this sum contained in $\Q Z$ are those for which $l=1$ and $i_1=n$, and so
\[\Phi(z) =\sum_{k\geq 0}\frac{1}{2k+1}\sum_{\substack{ z_{a_1},\ldots, z_{a_{2k+1}}\in Z\\ z_{a_1}\lozenge\cdots\lozenge z_{a_{2k+1}}=z}}z_{a_1}\ldots z_{a_{2k+1}}.\]

We claim this is primitive with respect to $\bDelta$. We first write $\bDelta(\Phi(z))=\sum_{r,s\geq 0}w_{r,s}$, where
\[w_{r,s}\in\text{Span}\{z_{i_1}\ldots z_{i_r}\otimes z_{j_1}\ldots z_{j_s}\}\]
is the degree $(r,s)$ component. As $\bDelta$ is cocommutative, it suffices to show that $w_{r,s}=0$ for all $0<r\leq s$.

Let us consider the contribution of $\bDelta(z_{i_1}\ldots z_{i_{2k+1}})$ to $w_{r,s}$. Every term in this contribution can be identified with a triplet $(\textbf{j},C,d)$ consisting of a sequence of integers
\[ 0<j_1<\cdots <j_m\leq 2k+1,\]
a composition  $C=(c_1,\ldots,c_m)$ of $r$ into $m$ parts, and a sequence $d=\{d_i\}\in\{\pm 1\}^m$ such that
\[2k+1-m+r+\sum_{i=1}^m d_i = s.\]

The sequence $\textbf{j}$ determines which $z_{i_1},\ldots,z_{i_{2k+1}}$ contribute terms to the left hand side of the tensor products in $w_{r,s}$. $C$ determines the degree contributed, and $d$ determines the degree contributed to the right hand side. More precisely, $(\textbf{j},C,d)$ determines the product $Z_{(\textbf{j},C,d)}^{z_{i_1}\ldots z_{i_{2k+1}}}$ given by
\[\left(\prod_{u=1}^{j_1-1}(1\otimes z_{i_u})\right)\left(\prod_{t=1}^m\left(\sum_{z_{p_1}\lozenge\cdots\lozenge z_{p_{2c_t+d_t}}=z_{i_{j_t}}}z_{p_1}\ldots z_{p_{c_t}}\otimes z_{p_{c_t+1}}\ldots z_{p_{2c_t+d_t}}\prod_{v=j_{t}+1}^{j_{t+1}-1}(1\otimes z_{i_v})\right)\right).\]
Here we take $j_{m+1}=2k+2$. The sign with which this term appears is uniquely determined by $(C,d)$ to be 
\[\prod_{i=1}^n d_i(-1)^{c_i} = (-1)^r\prod_{i=1}^m d_i.\]

Letting $\mathcal{C}(r,m)$ be the set of compositions of $r$ into exactly $m$ parts, we can thus write $w_{r,s}$ as the sum:
\[w_{r,s}=\sum_{k\geq 1}\frac{1}{2k+1}\sum_{m=1}^r\sum_{\substack{z_{i_1}\lozenge\cdots\lozenge z_{i_{2k+1}}=z\\ 0<j_1<\cdots <j_m\leq 2k+1}}\sum_{C\in\mathcal{C}(r,m)} \sideset{}{'}\sum_{d\in\{\pm 1\}^m} (-1)^r\left(\prod_{t=1}^m d_t \right)Z_{(\textbf{j},C,d)}^{z_{i_1}\ldots z_{i_{2k+1}}}\]
where the final sum is restricted to those $d$ such that $2k+1-m+r+\sum_{i=1}^m=s$. We can rewrite this sum as
$$w_{r,s}=\sum_{m=1}^r\sum_{C\in\mathcal{C}(r,m)}\sum_{d\in\{\pm 1\}^m} (-1)^r\left(\prod_{t=1}^m d_t\right) \frac{1}{s+m-r-\sum_{t=1}^md_t}\sum_{\substack{z_{i_1}\lozenge\cdots \lozenge z_{i_{2k+1}}=z\\ 0<j_1<\cdots <j_m\leq 2k+1}}Z_{(\textbf{j},C,d)}^{z_{i_1}\ldots z_{i_{2k+1}}}.$$

Performing the sums over the $i$ and $j$ indices, we obtain
\begin{equation*}
\begin{split}
w_{r,s}= &\sum_{m=1}^r\sum_{C\in\mathcal{C}(r,m)}\sum_{d\in\{\pm 1\}^m} (-1)^r\left(\prod_{t=1}^m d_t\right) \frac{1}{s+m-r-\sum_{t=1}^md_t}\binom{s+m-r-\sum_{t=1}^md_t}{m}\\
&\times \sum_{z_{a_1}\lozenge\cdots\lozenge z_{a_{r+s}}=z}z_{a_1}\ldots z_{a_r}\otimes z_{a_{r+1}}\ldots z_{a_{r+s}}.
\end{split}
\end{equation*}

Hence, it suffices to compute
$$\sum_{m=1}^r\sum_{C\in\mathcal{C}(r,m)}\sum_{d\in\{\pm 1\}^m}\left(\prod_{t=1}^md_t\right)\frac{1}{s+m-r-\sum_{t=1}^md_t}\binom{s+m-r-\sum_{i=1}^md_t}{m}.$$
Note that if $\sum_{t=1}^md_t=m-q$, then $\prod_{t=1}^m d_t=(-1)^q$, and so we can replace the sum over $d\in\{\pm 1\}^m$ with a sum over $q$, and perform the sum over compositions to obtain that this sum is equal to
$$\sum_{m=1}^r\sum_{q=0}^m (-1)^{r+q}\frac{1}{s-r+q}\binom{s-r+q}{m}\binom{r+m-1}{m-1}\binom{m}{q}.$$
We will evaluate the sum
$$Q_m:=\sum_{q=0}^m (-1)^q\frac{1}{s-r+q}\binom{s-r+q}{m}\binom{m}{q}.$$
Denote by $[x^i]f(x)$ the coefficient of $x^i$ in $f(x)$, where $f$ is a polynomial in $x$. Then we have
\begin{equation*}
\begin{split}
Q_m&=[x^m]\sum_{q=0}^m\frac{(-1)^q}{s-r+q}(x+1)^{s-r+q}\binom{m}{q}\\
&=[x^m]\int_{-1}^x(y+1)^{s-r-1} \sum_{q=0}^m (-1)^q(y+1)^q\binom{m}{q}dy\\
&=[x^m]\int_{-1}^x (y+1)^{s-r-1}(-y)^m dy\\
&=0\text{, as the term of minimal degree is $x^{m+1}$}.
\end{split}
\end{equation*}
Hence, $w_{r,s}=0$ for all $0<r\leq s$, and thus $\Phi(z)$ is primitive. Hence $\Phi$ is a coalgebra homomorphism, and so $\Psi_{\tanh^{-1}}$ is an algebra homomorphism.
\hfill\qed

\section{Hopf algebra structure}
In interest of completeness, we will also consider Hoffman's Hopf algebraic results. In particular, we claim that $(\Q\langle Z\rangle,\star)$ has the structure of a Hopf algebra when equipped with the deconcatenation coproduct 
\begin{align*}
\decon 1 &:= 1\otimes 1\\
\decon z_{i_1}z_{i_2}\ldots z_{i_r} &:= \sum_{k=0}^r z_{i_1}\ldots z_{i_k}\otimes z_{i_{k+1}}\ldots z_{i_r},
\end{align*}
counit
\begin{align*}
\epsilon(1) &:=1\\
\epsilon(w) &:= 0 \text{ for all }w\text{ a non empty word},
\end{align*}
and antipode
\begin{equation*}
S(z_{i_1}\ldots z_{i_r}) := (-1)^rz_{i_r}\ldots z_{i_1}.
\end{equation*}
We will also show that $\Psi_{\tanh}:(\Q\langle Z\rangle,\sh,\decon)\to (\Q\langle Z\rangle, \star,\decon)$ is a Hopf algebra isomorphism.

\begin{thm}
The algebra $(\Q\langle Z\rangle,\star)$ equipped with the above coproduct, counit, and antipode is a Hopf algebra.
\end{thm}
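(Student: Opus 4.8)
The plan is to transport the classical Hopf algebra structure of the shuffle algebra to the $\star$-side along the isomorphism $\Psi_{\tanh}$ of Proposition \ref{Tanh}. Recall that $(\Q\langle Z\rangle,\sh,\decon)$ is the tensor (``shuffle'') Hopf algebra: deconcatenation is a coassociative, counital coproduct, it is multiplicative for $\sh$, and its antipode is $S_0\colon w\mapsto(-1)^{|w|}\bar w$, where $\bar w$ is the reversal of a word $w$ and $|w|$ its length. Since we already know from Proposition \ref{Tanh} that $\Psi_{\tanh}$ is an algebra isomorphism $(\Q\langle Z\rangle,\sh)\to(\Q\langle Z\rangle,\star)$, it will be enough to show in addition that it is an isomorphism of coalgebras for $\decon$. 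Multiplicativity of $\decon$ for $\star$ (and with it the whole Hopf structure) is then inherited by transport through $\Psi_{\tanh}$, and the antipode on the $\star$-side is $\Psi_{\tanh}\circ S_0\circ\Psi_{\tanh}^{-1}$.

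The one genuinely combinatorial point is that $\Psi_{\tanh}$ commutes with deconcatenation. Compatibility with $\epsilon$ is immediate, since $\Psi_{\tanh}(1)=1$ and $\Psi_{\tanh}$ sends words of positive length to combinations of non-empty words. For the coproduct, I would write $\Psi_{\tanh}(w)=\sum_{C\in\mathcal{C}(n)}c_C\,C[w]$ for $|w|=n$, where $c_{(i_1,\ldots,i_l)}=c_{i_1}\cdots c_{i_l}$, and observe that $C[w]=[w_{(1)}]\cdots[w_{(l)}]$ is a product of $\ell(C)$ elements of $\Q Z$, one for each of the consecutive blocks $w_{(1)},\ldots,w_{(l)}$ into which $C$ cuts $w$. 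Deconcatenating such a product cuts only between two of its factors, and a cut after the $j$-th factor produces $(i_1,\ldots,i_j)[w_{\le k}]\otimes(i_{j+1},\ldots,i_l)[w_{>k}]$ with $k=i_1+\cdots+i_j$. Summing over $C$ and over the cut position $j$, and then regrouping the terms according to the induced cut position $k$ in $w$ (so that a pair $(C,j)$ is recorded as a composition of $k$ followed by a composition of $n-k$, with $c_C$ factoring accordingly), one should recover exactly $\sum_{k=0}^n\Psi_{\tanh}(w_{\le k})\otimes\Psi_{\tanh}(w_{>k})=(\Psi_{\tanh}\otimes\Psi_{\tanh})(\decon w)$. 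This is the analogue of the fact, due to Hoffman \cite{hoffmanquasi1}, that his quasi-shuffle isomorphism respects the deconcatenation coproduct, and this index bookkeeping is the step I expect to be the main (if routine) obstacle.

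It remains to identify the transported antipode $\Psi_{\tanh}\circ S_0\circ\Psi_{\tanh}^{-1}$ with the stated formula, which is again $S_0$. By uniqueness of antipodes it suffices to show that $\Psi_{\tanh}$ commutes with $S_0$. For $w=z_{a_1}\cdots z_{a_n}$ and a composition $C$, the commutativity and associativity of $\lozenge$ give $\overline{C[w]}=\bar C[\bar w]$, where $\bar C$ denotes $C$ read backwards, so that
\[
S_0\bigl(\Psi_{\tanh}(w)\bigr)=\sum_{C\in\mathcal{C}(n)}(-1)^{\ell(C)}c_C\,\bar C[\bar w]=\sum_{C\in\mathcal{C}(n)}(-1)^{\ell(C)}c_C\,C[\bar w].
\]
Since $\tanh$ is odd, $c_j=0$ for even $j$, so only compositions $C=(i_1,\ldots,i_l)$ with all parts odd contribute; for these $\ell(C)\equiv i_1+\cdots+i_l=n\pmod 2$, hence $(-1)^{\ell(C)}=(-1)^{n}$ and the last sum equals $(-1)^{n}\Psi_{\tanh}(\bar w)=\Psi_{\tanh}(S_0 w)$. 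Therefore the antipode of $(\Q\langle Z\rangle,\star,\decon)$ is $S_0$, i.e.\ the stated $S$; the counit is the stated $\epsilon$; and the coproduct is $\decon$ by construction. This establishes the theorem and, simultaneously, that $\Psi_{\tanh}$ is a Hopf algebra isomorphism $(\Q\langle Z\rangle,\sh,\decon)\to(\Q\langle Z\rangle,\star,\decon)$.
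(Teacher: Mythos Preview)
Your proof is correct and takes a genuinely different route from the paper. The paper establishes the Hopf structure \emph{directly}: it proves by induction on total word length that $\decon$ is a $\star$-homomorphism (using the recursive definition of $\star$ together with the identities $\decon(z_aw)=1\otimes z_aw+(z_a\otimes 1)\decon(w)$ and $\decon(L_aw)=1\otimes L_aw+(L_a\otimes 1)\decon(w)$), and then verifies the antipode identity by another induction that collapses the residual $L_{z_{i_k}\lozenge z_{i_{k+1}}}$-terms. Only afterwards does the paper show, as a separate theorem, that $\Psi_{\tanh}$ respects $\decon$.

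You instead merge the two theorems: you prove first that $\Psi_{\tanh}$ commutes with $\decon$ (your composition--cut bookkeeping is exactly the computation the paper carries out in its second theorem of this section), and then transport the entire Hopf structure from $(\Q\langle Z\rangle,\sh,\decon)$. Your identification of the antipode via the parity of $\tanh$ --- only odd-part compositions survive, forcing $(-1)^{\ell(C)}=(-1)^n$ --- is a neat observation that the paper does not use; it replaces the paper's inductive verification of the antipode axiom. The upside of your approach is economy: one argument yields both the Hopf structure and the Hopf isomorphism. The upside of the paper's direct proof is that it is self-contained at the level of the recursive definition of $\star$ and does not rest on the (somewhat involved) proof of Proposition~\ref{tanhmorphism}.
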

\begin{proof}
It is clear that $(\Q\langle Z\rangle, \decon,\epsilon)$ is a coalgebra, so it remains to show that $\decon$ and $\epsilon$ are $\star$-homomorphisms, and that $S$ satisfies
\[\sum_{k=0}^r S(z_{i_1}\ldots z_{i_k})\star z_{i_{k+1}}\ldots z_{i_r} = \sum_{k=0}^r z_{i_1}\ldots z_{i_k}\star S(z_{i_{k+1}}\ldots z_{i_r})=0.\]

By considerations of length, it is clear that $\epsilon$ is a $\star$-homomorphism. Since 
\[\decon(1\star z_{i_1}\ldots z_{i_r}) = \decon(z_{i_1}\ldots z_{i_r}\star 1) = \decon(1)(\star\otimes \star)\decon(z_{i_1}\ldots z_{i_r})\]
we may induct on the length. Suppose $\decon(u\star v) = \decon(u)(\star\otimes\star)\decon(v)$ for all pairs of monomials $(u,v)$ of combined length less than $n$. Consider a pair of monomials $(z_au,z_bv)$ of combined length $n$.
Then note that
\begin{align*}
\decon(z_aw) &= 1\otimes z_aw + (z_a\otimes 1)\decon(w),\\
\decon(L_aw) &= 1\otimes L_aw + (L_a\otimes 1)\decon(w).
\end{align*}
As such, the recursive formula for $\star$ gives
\begin{align*}
\decon(z_au\star z_bv) ={}&{}\decon(z_a(u\star z_bv))+\decon(z_b(z_au\star v))-\decon(L_{a+b}(u\star v))\\
={}&{} 1\otimes (z_a(u\star z_bv)+ z_b(z_au\star b) - L_{a+b}(u\star v))\\
&+ (z_a\otimes 1)\decon(u\star z_bv) + (z_b\otimes 1)\decon(z_au\star v)\\
&- (L_{z_a\lozenge z_b}\otimes 1)\decon(u\star v)\\
={}&{} 1\otimes (z_au\star z_bv) + (z_a\otimes 1)\left(\decon(u)(\star\otimes \star)\decon(z_bv)\right)\\
&+ (z_b\otimes 1)\left(\decon(z_au)\bstar\decon(v)\right)\\
&-(L_{z_a\lozenge z_b}\otimes 1)\left(\decon(u)\bstar\decon(v)\right),
\end{align*}
where by $\bstar$ we denote the bilinear operator given by
\[(u_1\otimes v_1)\bstar {}(u_2\otimes v_2):= (u_1\star u_2)\otimes (v_1\otimes v_2).\]
Note that 
\begin{align*}
(z_{a}u_1\otimes u_2)\bstar {}(z_{b}v_1\otimes v_2)={}&{} (z_{a}\otimes 1)\left((u_1\star z_{b}v_1)\otimes (u_2\star v_2)\right)\\
&+ (z_b\otimes 1)\left((z_au_1\star v_1)\otimes (u_2\star v_2)\right)\\
&- (L_{z_a\lozenge z_b}\otimes 1)\left((u_1\star v_1)\otimes(u_2\star v_2)\right)
\end{align*}
and hence
\begin{align*}
&\sum_{k=0}^r\sum_{l=0}^q (z_{i_1}\ldots z_{i_k}\otimes z_{i_{k+1}}\ldots z_{i_r})\bstar {}(z_{j_1}\ldots z_{j_l}\otimes z_{j_{l+1}}\ldots z_{j_q})\\
={}&{} 1\otimes (z_{i_1}\ldots z_{i_r}\star z_{j_1}\ldots z_{j_q})\\
&+ (z_{i_1}\otimes 1)\sum_{k=1}^r\sum_{l=0}^q(z_{i_2}\ldots z_{i_k}\otimes z_{i_{k+1}}\ldots z_{i_r})\bstar {}(z_{j_1}\ldots z_{j_l}\otimes z_{j_{l+1}}\ldots z_{j_q})\\
&+ (z_{j_1}\otimes 1)\sum_{k=0}^r\sum_{l=1}^q(z_{i_1}\ldots z_{i_k}\otimes z_{i_{k+1}}\ldots z_{i_r})\bstar {}(z_{j_2}\ldots z_{j_l}\otimes z_{j_{l+1}}\ldots z_{j_q})\\
&-(L_{z_{i_1}\lozenge z_{j_1}}\otimes 1)\sum_{k=1}^r\sum_{l=1}^q(z_{i_2}\ldots z_{i_k}\otimes z_{i_{k+1}}\ldots z_{i_r})\bstar {}(z_{j_2}\ldots z_{j_l}\otimes z_{j_{l+1}}\ldots z_{j_q}).
\end{align*}
In particular
\begin{align*}
\decon(z_au)(\star \otimes \star)\decon(z_bv) ={}&{} 1\otimes (z_au\star z_bv) + (z_a\otimes 1)\left(\decon(u)\bstar\decon(z_bv)\right)\\
&+ (z_b\otimes 1)\left(\decon(z_au)\bstar\decon(v)\right)\\
&-(L_{z_a\lozenge z_b}\otimes 1)\left(\decon(u)\bstar\decon(v)\right)\\
={}&{} \decon(z_au\star z_bv).
\end{align*}
Thus $\decon$ is a $\star$-homomorphism.

Finally it remains to confirm $S$ satisfies the antipode axioms. We will only show that 
\[\sum_{k=0}^r S(z_{i_1}\ldots z_{i_k})\star z_{i_{k+1}}\ldots z_{i_r}=0.\]
In particular, we need to show that
\begin{align*}
\sum_{k=0}^r (-1)^k (z_{i_k}\ldots z_{i_1})\star (z_{i_{k+1}}\ldots z_{i_r}) =&{} \sum_{k=0}^r (-1)^k z_{i_k}(z_{i_{k-1}}\ldots z_{i_1}\star z_{i_{k+1}}\ldots z_{i_r})\\
&+(-1)^kz_{i_{k+1}}(z_{i_k}\ldots z_{i_1}\star z_{i_{k+2}}\ldots z_{i_r})\\
&+(-1)^{k+1}L_{z_{i_k}\lozenge z_{i_k+1}}(z_{i_{k-1}}\ldots z_{i_1}\star z_{i_{k+2}}\ldots z_{i_r})
=&{} 0.
\end{align*}
For fixed $k$, the part of this sum begining with $z_{i_k}$  is given by
\[ (-1)^kz_{i_k}(z_{i_{k-1}}\ldots z_{i_1}\star z_{i_{k+1}}\ldots z_{i_r}) + (-1)^{k-1} z_{i_k}(z_{i_{k-1}}\ldots z_{i_{k+1}}\ldots z_{i_r})=0.\]
Thus the only surviving terms are 
\[\sum_{k=0}^r (-1)^{k+1}L_{z_{i_k}\lozenge z_{i_{k+1}}}(z_{i_{k-1}}\ldots z_{i_1}\star z_{i_{k+2}}\ldots z_{i_r}).\]
We claim this vanishes, and argue by induction on the length. One may easily verify that these terms vanish for $r=0,1,2,3$. Suppose these terms vanish for any work of length less than $r$. Then, using the recursive forumula, we have that
\begin{align*}
&\sum_{k=0}^r (-1)^{k+1}L_{z_{i_k}\lozenge z_{i_{k+1}}}(z_{i_{k-1}}\ldots z_{i_1}\star z_{i_{k+2}}\ldots z_{i_r})\\
=&{} \sum_{k=0}^r (-1)^{k+1}(z_{i_{k-1}}\lozenge z_{i_k}\lozenge z_{i_{k+1}})(z_{i_{k-2}}\ldots z_{i_1}\star z_{i_{k+2}}\ldots z_{i_r})\\
&+(-1)^{k+1}(z_{i_{k}}\lozenge z_{i_{k+1}}\lozenge z_{i_{k+2}})(z_{i_{k-1}}\ldots z_{i_1}\star z_{i_{k+3}}\ldots z_{i_r})\\
&+(-1)^k L_{z_{i_{k-1}}\lozenge z_{i_k}\lozenge z_{i_{k+1}}\lozenge z_{i_{k+2}}}(z_{i_{k-2}}\ldots z_{i_1}\star z_{i_{k+3}}\ldots z_{i_r})
\end{align*}
Clearly, the first two lines cancel out. Iterating the process, we obtain the desired equality.
\end{proof}

\begin{thm}
The algebra morphism $\Psi_{\tanh}:(\Q\langle Z\rangle,\sh)\to (\Q\langle Z\rangle, \star)$ is a Hopf algebra isomorphism of $(\Q\langle Z\rangle,\sh,\decon)\to (\Q\langle Z\rangle, \star,\decon)$.
\end{thm}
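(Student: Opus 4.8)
The plan is to exploit the fact that almost all the work is already done: by Proposition \ref{Tanh}, $\Psi_{\tanh}$ is already an algebra isomorphism $(\Q\langle Z\rangle,\sh)\to(\Q\langle Z\rangle,\star)$, and by the preceding theorem both sides carry Hopf algebra structures with the deconcatenation coproduct $\decon$. So it remains only to check that $\Psi_{\tanh}$ is a morphism of coalgebras for $\decon$; compatibility with the counit $\epsilon$ is immediate, and compatibility with the antipode $S$ will then come for free.

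First, the counit: since $\Psi_{\tanh}(1)=1$ and $\Psi_{\tanh}$ sends every non-empty word to a $\Q$-linear combination of non-empty words (the operation $(i_1,\ldots,i_l)[w]$ always produces a word of length $l\geq 1$), we get $\epsilon\circ\Psi_{\tanh}=\epsilon$. Next, the coproduct: this is an identity of linear maps $\Q\langle Z\rangle\to\Q\langle Z\rangle\otimes\Q\langle Z\rangle$ that does not see the products, so it is enough to verify $\decon\circ\Psi_{\tanh}=(\Psi_{\tanh}\otimes\Psi_{\tanh})\circ\decon$ directly on a word $w=z_{a_1}\cdots z_{a_n}$. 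Expanding $\Psi_{\tanh}(w)$ as a sum over compositions $(i_1,\ldots,i_l)\in\mathcal{C}(n)$ of $c_{i_1}\cdots c_{i_l}$ times the grouped word $[z_{a_1}\cdots z_{a_{i_1}}][z_{a_{i_1+1}}\cdots]\cdots$ and then applying $\decon$, each resulting term is cut at some position $j\in\{0,\ldots,l\}$ lying between two blocks. The key structural point is that the blocks are intervals of consecutive letters of $w$, so cutting the grouped word after block $j$ is exactly cutting $w$ after letter $p:=i_1+\cdots+i_j$; the assignment $\bigl((i_1,\ldots,i_l),j\bigr)\mapsto\bigl(p,(i_1,\ldots,i_j),(i_{j+1},\ldots,i_l)\bigr)$ is a bijection onto triples consisting of a cut point $0\leq p\leq n$ together with a composition of $p$ and a composition of $n-p$, and it splits the coefficient multiplicatively as $c_{i_1}\cdots c_{i_l}=(c_{i_1}\cdots c_{i_j})(c_{i_{j+1}}\cdots c_{i_l})$. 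Re-summing in this order rewrites $\decon(\Psi_{\tanh}(w))$ as $\sum_{p=0}^n\Psi_{\tanh}(z_{a_1}\cdots z_{a_p})\otimes\Psi_{\tanh}(z_{a_{p+1}}\cdots z_{a_n})=(\Psi_{\tanh}\otimes\Psi_{\tanh})(\decon(w))$, as required.

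Having shown $\Psi_{\tanh}$ is simultaneously an algebra and a coalgebra isomorphism, it is a bialgebra isomorphism, and compatibility with the antipodes is then automatic: on any Hopf algebra the antipode is the convolution inverse of the identity in $\operatorname{End}$, hence preserved by any bialgebra morphism, so $\Psi_{\tanh}\circ S=S\circ\Psi_{\tanh}$ with no further computation. I do not expect a genuine obstacle; the only step needing care is the reindexing bijection in the coproduct computation, and the reason it goes through cleanly is precisely that the grouping operation $(i_1,\ldots,i_l)[-]$ never amalgamates letters across the boundaries between the parts of the composition, so it commutes with deconcatenation at those boundaries.
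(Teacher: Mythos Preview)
Your proposal is correct and follows essentially the same approach as the paper: reduce to checking compatibility with $\epsilon$ and $\decon$, verify $\decon\circ\Psi_{\tanh}=(\Psi_{\tanh}\otimes\Psi_{\tanh})\circ\decon$ on a word by the same reindexing bijection between compositions-with-a-cut and pairs of compositions, and then note that antipode compatibility is automatic. The only cosmetic difference is that the paper phrases the last step as ``a bialgebra admits at most one Hopf algebra structure'' while you spell out the convolution-inverse reason, which amounts to the same thing.
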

\begin{proof}
From Proposition \ref{tanhmorphism}, we have that $\Psi_{\tanh}$ is an isomorphism of algebras. As a bialgebra admits at most one Hopf algebra structure, it suffices to show that $\Psi_{\tanh}$ is compatible with the counit and coproduct. It is immediate that $\Psi_{\tanh}\circ \epsilon = \epsilon\circ \Psi_{\tanh}$, and so it remains to show that $\decon\circ \Psi_{\tanh} = (\Psi_{\tanh}\otimes\Psi_{\tanh})\circ \decon$. This is trivial for the empty word, and for a non-empty word $w=z_{i_1}\ldots z_{i_r}$ in $Z$, we have $\decon(\Psi_{\tanh}(w))$ is given by
\begin{align*}
&{} \sum_{(j_1,\ldots,j_l)\in\mathcal{C}(r)} c_{j_1}\ldots c_{j_l} \sum_{k=0}^l (j_1,\ldots,j_k)[z_{i_1}\ldots z_{i_{j_1+\cdots + j_k}}]\otimes (j_{k+1},\ldots, j_{l})[z_{i_{j_1+\cdots + j_k+1}}\ldots z_{i_r}]\\
=&{} \sum_{p+q=r} \sum_{\substack{(j_1,\ldots,j_m)\in\mathcal{C}(p)\\(k_1,\ldots,k_n)\in\mathcal{C}(q)}} c_{j_1}\ldots c_{j_m}c_{k_1}\ldots c_{k_n} (j_1,\ldots, j_m)[z_{i_1}\ldots z_{i_p}] \otimes (k_1,\ldots, k_n)[z_{i_{p+1}},\ldots,z_{i_r}]\\
=&{} \sum_{p+q=r} \Psi_{\tanh}(z_{i_1}\ldots z_{i_p})\otimes \Psi_{\tanh}(z_{i_{p+1}}\ldots z_{i_r})\\
={}&{} (\Psi_{\tanh}\otimes\Psi_{\tanh})(\decon(w))
\end{align*}
where $c_n$ is the coefficient of $x^n$ in the Taylor series of $\tanh(x)$.
\end{proof}

\begin{remark}
This proof also shows that the graded duals introduced in the proof of Proposition \ref{tanhmorphism} are isomorphic as Hopf algebras when equipped with the concatenation product $\cdot$.
\[(\Q\langle Z\rangle,\cdot,\Delta)\cong (\Q\langle Z\rangle, \cdot,\bDelta).\]
In particular, this allows us to computed the primitive elements of  $(\Q\langle Z\rangle, \cdot,\bDelta)$ in terms of primitive elements of $(\Q\langle Z\rangle, \cdot,\Delta)$. 

Knowing \ref{fullblockshuff}, we must have that (motivic) multiple zeta values satisfy relations coming from the block shuffle algebra, and hence the motivic Lie algebra $\gmot$ \cite{delmixedtate} encoding relations among motivic multiple zeta values \cite{brownmixedtate} should inject into the set of primitive elements of $(\Q\langle Z\rangle, \cdot,\bDelta)$. It is known from work following the author's thesis \cite{keilthyblock1}\cite{mythesis} that a graded version $\bg$ of the motivic Lie algebra injects into the set of primitive elements of $(\Q\langle Z\rangle, \cdot,\Delta)$.

As such, one might hope to find maps such that the following diagram commutes
\begin{center}
\begin{tikzcd}
\bg \arrow[r] \arrow[d, hook]& \gmot \arrow[d, hook]\\
(\Q\langle Z\rangle,\Delta) \arrow[r, "\Psi_{\tanh}"] & (\Q\langle Z\rangle,\bDelta)
\end{tikzcd}
\end{center}

Both $\gmot$ and $\bg$ are known to be isomorphic to the free Lie algebra $\text{Lie}[\sigma_3,\sigma_5,\sigma_7,\ldots]$, but this isomorphism is only canonical in the case of $\bg$. An explicit, canonical commutative diagram, as above, would allow us to somehow ``lift'' the isomorphism 
\[\bg \cong \text{Lie}[\sigma_3,\sigma_5,\sigma_7,\ldots]\]
to an explicit canonical choice of isomorphism
\[\gmot \cong \text{Lie}[\sigma_3,\sigma_5,\sigma_7,\ldots]\]
thereby enabling us to construct examples of non-trivial rational associators \cite{zeta3}, to establish relations via inductive methods like those of \cite{keilthyblock1}, and explore explicitly the ``extra generators'' required to correct the depth defect \cite{depthgraded}.

\end{remark}

\bibliographystyle{abbrv}
\bibliography{summary_bib}

\begin{thebibliography}{10}

\bibitem{barnatan}
D.~Bar-Natan.
\newblock On associators and the {Grothendieck-Teichmuller} group.
\newblock {\em Selecta Mathematica}, 4(2):183, 1998.

\bibitem{datamine}
J.~Blümlein, D.~Broadhurst, and J.~Vermaseren.
\newblock The multiple zeta value data mine.
\newblock {\em Computer Physics Communications}, 181(3):582–625, Mar 2010.

\bibitem{bbbl}
J.~M. Borwein, D.~M. Bradley, D.~J. Broadhurst, and P.~Lison{\v{e}}k.
\newblock Combinatorial aspects of multiple zeta values.
\newblock {\em The Electronic Journal of Combinatorics [electronic only]},
  5(1):Research--paper, 1998.

\bibitem{bowmanbradley02}
D.~Bowman and D.~M. Bradley.
\newblock The algebra and combinatorics of shuffles andmultiple zeta values.
\newblock {\em Journal of Combinatorial Theory, Series A}, 97(1):43--61, 2002.

\bibitem{bkconj}
D.~J. Broadhurst and D.~Kreimer.
\newblock Association of multiple zeta values with positive knots via feynman
  diagrams up to 9 loops.
\newblock {\em Physics Letters B}, 393(3-4):403--412, 1997.

\bibitem{brownmixedtate}
F.~Brown.
\newblock Mixed {T}ate motives over {$\Z$}.
\newblock {\em Annals of Mathematics}, pages 949--976, 2012.

\bibitem{zeta3}
F.~{Brown}.
\newblock Zeta elements in depth 3 and the fundamental {Lie} algebra of a
  punctured elliptic curve.
\newblock {\em Forum Math. Sigma}, 5:1--56, 2017.

\bibitem{depthgraded}
F.~Brown.
\newblock Depth-graded motivic multiple zeta values.
\newblock {\em Compositio Mathematica}, 157(3):529–572, 2021.

\bibitem{charthesis}
S.~Charlton.
\newblock {The Alternating Block Decomposition of Iterated Integrals and Cyclic
  Insertion on Multiple Zeta Values}.
\newblock {\em The Quarterly Journal of Mathematics}, 72(3):975--1028, 02 2021.

\bibitem{chen}
K.~{Chen}.
\newblock Iterated path integrals.
\newblock {\em Bull. Amer. Math. Soc.}, 83(5):831--879, 09 1977.

\bibitem{delmixedtate}
P.~Deligne and A.~B. Goncharov.
\newblock Groupes fondamentaux motiviques de {T}ate mixte.
\newblock {\em Annales scientifiques de l’{\'E}cole Normale Sup{\'e}rieure},
  38(1):1--56, 2005.

\bibitem{drinfeld}
V.~G. Drinfeld.
\newblock On quasitriangular quasi-{Hopf} algebras and on a groups that is
  closely associated with {Gal($\bar\Q/\Q$}).
\newblock {\em Algebra i Analiz}, 2(4):149--181, 1990.

\bibitem{hirosesatoblock}
M.~Hirose and N.~Sato.
\newblock Block shuffle identities for multiple zeta values.
\newblock {\em arXiv}, 2206.03458, 2022.

\bibitem{hoffmanquasi1}
M.~{Hoffman}.
\newblock Quasi-shuffle products.
\newblock {\em Journal of {Algebraic} {Combinatorics}}, 11:49--68, 2000.

\bibitem{hoffmanquasi2}
M.~{Hoffman} and K.~{Ihara}.
\newblock Quasi-shuffle products revisited.
\newblock {\em Journal of {Algebra}}, 481:293--326, 2017.

\bibitem{mythesis}
A.~Keilthy.
\newblock Rational structures on multiple zeta values.
\newblock {\em PhD thesis, University of Oxford}, 2020.

\bibitem{keilthyblock1}
A.~Keilthy.
\newblock Motivic multiple zeta values and the block filtration.
\newblock {\em Journal of Number Theory}, 238:883--919, 2022.

\end{thebibliography}
\end{document}